\documentclass[12pt]{amsart}

\usepackage{amsfonts,amsthm,amsmath,amssymb,amscd,mathrsfs,tikz}
\usepackage{graphics}
\usepackage{indentfirst}
\usepackage{cite}
\usepackage{latexsym}
\usepackage[dvips]{epsfig}

\usepackage{color}
\usepackage{amssymb,amsmath,bm}
\usepackage{fontspec}

\usepackage[title]{appendix}
\usepackage{enumerate}

\setlength{\paperheight}{11in}
\setlength{\paperwidth}{8.5in}
\addtolength{\voffset}{-0.25in}
\addtolength{\hoffset}{-0.75in}
\setlength{\textwidth}{6.5in}
\setlength{\textheight}{8.35in}
\setlength{\footskip}{36pt}
\setlength{\marginparsep}{0pt}
\setlength{\marginparwidth}{0in}
\setlength{\headheight}{8pt}
\setlength{\headsep}{20pt}
\setlength{\oddsidemargin}{0.75in}
\setlength{\evensidemargin}{0.75in}




\newtheorem{theorem}{Theorem}[section]
\newtheorem{remark}{Remark}[section]
\newtheorem{definition}{Definition}[section]
\newtheorem{lemma}[theorem]{Lemma}
\newtheorem{pro}[theorem]{Proposition}




\renewcommand{\div}{{\rm div }}

\renewcommand{\r}{\mathbb{R}}

\newcommand{\bt}{\begin{theorem}}
	\newcommand{\bl}{\begin{lemma}}
		\newcommand{\el}{\end{lemma}}
	\newcommand{\et}{\end{theorem}}

\newcommand{\bn}{\begin{eqnarray}}
	\newcommand{\en}{\end{eqnarray}}
\newcommand{\bnn}{\begin{eqnarray*}}
	\newcommand{\enn}{\end{eqnarray*}}

\newcommand{\ba}{\begin{aligned}}
	\newcommand{\ea}{\end{aligned}}
\newcommand{\be}{\begin{equation}}
	\newcommand{\ee}{\end{equation}}

\newcommand{\p}{\partial}


\newcommand{\dd}{\mathrm{d}}

\newcommand{\bBV}{\boldsymbol{V}}

\newcommand{\Bu}{{\boldsymbol{u}}}
\newcommand{\Be}{{\boldsymbol{e}}}

\newcommand{\D}{{\boldsymbol{D}}}

\newcommand{\BPsi}{{\boldsymbol{\Psi}}}

\newcommand{\BA}{{\boldsymbol{A}}}
\newcommand{\Bx}{{\boldsymbol{x}}}

\newcommand{\Bp}{{\boldsymbol{\phi}}}

\newcommand{\OR}{{\mathscr{O}}_{R}}

\begin{document}
	
	\title[Liouville-type theorems]
	{ Liouville-type theorems for  the stationary non-Newtonian fluids in a slab}
	
		\author{Jingwen Han}
		\address{School of Mathematics and Statistics, Anhui Normal University, Wuhu, China}
		\email{mathjwh95@ahnu.edu.cn}

    \author{Han Li}
\address{School of Mathematical Sciences, Soochow University, No. 1 Shizi Street, Suzhou, China}
\email{hli3@stu.suda.edu.cn}

	\begin{abstract}
		In this paper, we investigate the Liouville-type theorems for stationary solutions to the shear thickening fluid equations in a slab. We show that  the axisymmetric solution  must be trivial if its local $L^{\infty}$-norm grows mildly as the radius $R$ grows. Also a bounded general solution $\Bu$ must be trivial if  $ru^{r}$ is bounded. The proof is inspired by \cite{aBGWX} for the Navier-Stokes equations and the key point is to establish a Saint-Venant type estimate that characterizes the growth of local Dirichlet integral  of nontrivial solutions. One new ingredient is the estimate of the constant in Korn's inequality over different domains.
	\end{abstract}

	\keywords{Liouville-type theorems,  stationary non-Newtonian fluids, slab, no-slip boundary conditions.}
	\subjclass[2010]{
		35B53,   35Q35,		 76A05, 76D03}

	
	\maketitle

	\section{Introduction and Main Results}
	In this paper, we are concerned with the Liouville-type theorem for  the stationary non-Newtonian fluid equations in a slab domain $\Omega=\mathbb{R}^{2}\times [0,1]$, i.e., 
	\begin{equation}\label{eqsteadynNfs}
		\left\{ \ba
		& -\text{div} \BA_{p}(\Bu) + (\Bu \cdot \nabla )\Bu + \nabla P  = 0, \ \ \ \ \
		&\mbox{in}\ \Omega, \\
		& \nabla \cdot \Bu =0,  \ \ \ \ \ &\mbox{in}\ \Omega.  \\
		\ea \right.
	\end{equation}
	Here the unknown function $\Bu=(u^1,u^2,u^3)$  is the velocity field of the fluid,  $P$ is the pressure. The diffusion term 
	\[
	\BA_{p}(\Bu)=(1+|\D(\Bu)|^2)^{\frac{p-2}{2}}\D(\Bu), \quad 1<p<+\infty,
	\]
	where	$\D(\Bu)=\frac{1}{2}(\nabla\Bu+(\nabla\Bu)^{\top})$ is the stress tensor. When $p=2$, the equations \eqref{eqsteadynNfs} become the classical Navier-Stokes equations for Newtonian fluids.  When $2 <p< +\infty$, the equations describe the motion of shear thickening fluids, for which the viscosity increases along with the shear rate $| \D(\Bu)|$. When $1 <p< 2$, the equations correspond to the shear thinning fluids.  We refer to \cite{WL} for the physical background of the equations. In this paper, we focus on the equations for shear thickening fluids, i.e., $p \ge 2$,  which are   supplemented with no-slip boundary conditions
	\begin{equation}\label{noslipboun}
		\Bu=0, \ \ \ \text{at}\ x_{3}=0\ \text{and}\ 1.
	\end{equation}  
	
	The existence of weak solutions  in bounded domains has been shown by Ladyzhenskaya \cite{OAL1,OAL2,OAL3}, Lions \cite{LJL} for $p \ge \frac{3n}{n+2}$,  Frehse-M\'{a}lek-Steinhauer\cite{FJMJSM} and Ruzi\v{c}k\v{a}\cite{MR} for $ p \ge \frac{2n}{n+1}$. In unbounded domains with noncompact boundaries, Yang and Yin   \cite{yangYin18} obtained the existence and uniqueness of weak solutions.  We can also refer to  \cite{MP,FJMJSM} for partial regularity results of weak solutions.

	The study of the Liouville-type theorems for the stationary  Navier-Stokes equations was pioneered to investigate the uniqueness of D-solutions in a domain $\Omega$, i.e., the solution satisfies the finite integral
	\begin{equation}\label{finiDinNS}
		\int_{\Omega}|\nabla\Bu|^{2}\,\dd\Bx<\infty.
	\end{equation}
	Whether the D-solutions $\Bu$ in the three-dimensional whole space equals zero if it vanishes at infinity remains an open and challenging problem \cite[X.9 Remark X. 9.4]{GAGP11}.
	Gilbarg and Weinberger \cite{GWASP78} proved the two-dimensional D-solutions of the Navier-Stokes equations in the whole space must be constant. But the general three-dimensional case is not known yet, it was obtained by Korobkov, Pileckas and Russo \cite{KPRJMFM15}  that the axisymmetric without swirl D-solutions of the Navier-Stokes equations in $\mathbb{R}^{3}$ must vanish.  In the slab domain, Carrillo, Pan, Zhang and Zhao \cite{BPZZ} showed the D-solutions supplemented with no-slip boundary conditions must be zero. Under the integrability assumption, Tsai \cite{TT21}  obtained the Liouville-type theorem for the Navier-Stokes equations in the slab domain.
	One may refer \cite{CCMP14,CW2019CVPDE,CWDCDS16,SN16, KTWJFA17,WJDE19,NZ19} and the references therein for partial progress for the problem.   Another classification of $L^{\infty}$-solutions to the Navier-Stokes equations  is also a significant area of Liouville-type theorems, which plays an important role in analyzing the singularity and the far field behavior for the solutions. The remarkble progress by  Koch, Nadirashvili, Seregin and Sverak \cite{KNSS09} showed that any bounded solutions of the two-dimensional or axisymmetric  with no swirl to the Navier-Stokes equations must be constant vectors, they also showed the axisymmetric bounded solutions  with $r|\Bu|\leq C$ must be trivial. Recently, Bang, Gui, Wang and Xie  \cite{aBGWX} considered the bounded solutions in the slab, 
	they proved in the slab with no slip boundary conditions that any bounded solution of the Navier-Stokes equations is trivial if it is axisymmetric or $ r u^r$ is bounded for general  solutions.
	
	Compared with the  Navier-Stokes equations, there are few Liouville-type results on the non-Newtonian fluids equations. Bildhauer, Fuchs and Zhang in  \cite{JMFM13bfz} considered Liouville-type theorems for  the  stationary generalized Newtonian fluids in the two-dimensional whole space.    Specifically, for shear thinning  fluids, they proved  any bounded solutions must be a constant vector. While for shear thickening fluids, they  proved the solutions must be trivial under the decay assumptions.  In \cite{BK},  Jin and  Kang derived the Liouville-type theorems  for weak solutions with finite energy. Chae, Kim and  Wolf \cite{DJ1,DJJDE21}  assert that if a suitable weak solution   $\Bu \in W^{1,p}(\mathbb{R}^3)$  of the non-Newtonian fluids satisfies some extra integrability conditions, then $\Bu$ is trivial. For more references about Liouville-type theorems of the non-Newtonian fluids in $\r^2$, one may refer to
	\cite{MF,MFGZ,GZ} and the references therein. To the best of our knowledge, there is no Liouville-type result for the non-Newtonian fluids in a slab.
	
	Let the standard cylindrical coordinates of
	$\mathbb{R}^3$ are $(r,\theta, z)$, which are defined as follows:
	\[
	\Bx=(x_{1}, x_{2}, x_{3})=(r\cos\theta, r\sin\theta, z).
	\]
	In cylindrical coordinates, the general   velocity $\Bu$  can be written as 
	\begin{equation}\label{cylindcoo}
		\Bu=u^{r}(r,\theta,z)\Be_{r}+u^{\theta}(r,\theta,z)\Be_{\theta}+u^{z}(r,\theta,z)\Be_{z},
	\end{equation}
	where scalar components $u^{r}, u^{\theta}, u^{z}$ are called radial, swirl and axial velocity, respectively, and the basis vectors $\Be_{r}, \Be_{\theta}, \Be_{z}$ are
	\[
	\Be_{r}=(\cos\theta, \sin\theta, 0), \quad
	\Be_{\theta}=(-\sin\theta, \cos\theta,0), \quad
	\Be_{z}=(0,0,1).
	\]
	If a scalar function $\varphi$ does not depend on $\theta$, it is called axisymmetric. If the scalar components $u^{r}, u^{\theta}, u^{z}$  in \eqref{cylindcoo} do not depend on $\theta$, then the velocity $\Bu$ is called axisymmetric. 
	In this paper, we investigate the Liouville-type theorems  of axisymmetric and general  shear-thickening fluids in a slab.  
	
	Before stating the main results of this paper, we first give the definition of  the weak solution to the stationary non-Newtonian fluid equations in a slab domain with no-slip boundary conditions.

	\begin{definition}\label{def1we}
		Let $1< p < +\infty$. A vector field $\Bu \in H^1_{loc}(\Omega)\cap W^{1,p}_{loc}(\Omega)$ is called the weak solution of  the equations \eqref{eqsteadynNfs} with no-slip boundary conditions \eqref{noslipboun},  if $\Bu$ satisfies
		\begin{equation}
			\begin{aligned}
				\int_{\Omega} \BA_{p}(\Bu):\D(\Bp) +(\Bu \cdot \nabla )\Bu \cdot \Bp \,\dd \Bx=0,  \quad \quad \text{for any} \,\,\Bp \in C_{0}^{\infty}(\Omega)\,\, \text{with} \,\,\text{\div}\Bp=0.
			\end{aligned}
		\end{equation}		
	\end{definition}
According to Galdi \cite[Theorem III. 5.3]{GAGP11}, there is an associated pressure $P\in L_{loc}^{p}(\Omega)$ such that the weak solution  $\Bu$ in Definition \ref{def1we}  satisfies
\begin{equation}
    \begin{aligned}
        \int_{\Omega} \BA_{p}(\Bu):\D(\Bp) +(\Bu \cdot \nabla )\Bu \cdot \Bp \,\dd \Bx-\int_{\Omega}P\nabla\cdot \Bp\, \dd \Bx=0,  \quad \quad \text{for any} \,\,\Bp\in C_{0}^{\infty}(\Omega).
    \end{aligned}
\end{equation}

 
	The first main result of this paper is stated as follows.
  \begin{theorem}\label{DS}
      Assume that $2\leq p<+\infty$ and $\Omega= \mathbb{R}^2\times (0, 1)$. Let $\Bu$ be the weak solution of the equations \eqref{eqsteadynNfs} in a slab with no-slip boundary conditions \eqref{noslipboun},   then $\Bu\equiv 0$, provided that the Dirichlet integral is finite, i.e., 
      \[
  \int_{0}^{1}\iint_{\mathbb{R}^{2}}(|\nabla\Bu|^{2}+|\D(\Bu)|^p)\,\dd x_{1}\dd x_{2}\dd x_{3}<\infty.
      \]
 \end{theorem}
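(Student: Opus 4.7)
The plan is to derive a Saint--Venant type decay estimate for the local quantity
\[
E(R) := \int_{B_R\times(0,1)}\bigl(|\D(\Bu)|^{2}+|\D(\Bu)|^{p}\bigr)\,\dd\Bx,
\]
and combine it with the finite Dirichlet hypothesis to force $E(R)\equiv 0$. The starting point is a horizontal cutoff $\phi_R\in C_c^\infty(B_{2R})$ with $\phi_R\equiv 1$ on $B_R$ and $|\nabla\phi_R|\lesssim 1/R$. Since $\Bu\phi_R$ is not divergence-free, I would use the Bogovskii operator on the annular slab $A_R:=(B_{2R}\setminus B_R)\times(0,1)$ to produce a corrector $\BPsi_R\in W_0^{1,p}(A_R)$ with $\div\BPsi_R=\Bu\cdot\nabla\phi_R$ (the mean-zero compatibility holds because $\div\Bu=0$ and the no-slip boundary); testing the weak formulation against the divergence-free, compactly supported field $\Bu\phi_R-\BPsi_R$ then eliminates the pressure.

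For the diffusion term I would use the coercivity $\BA_p(\Bu):\D(\Bu)\geq|\D(\Bu)|^{2}$ together with $\BA_p(\Bu):\D(\Bu)\geq c|\D(\Bu)|^{p}$ on $\{|\D(\Bu)|\geq 1\}$, which produces a lower bound of $c\,E(R)$ up to a bounded correction on $A_R$. The remaining contributions localize on $A_R$: the diffusion cross-terms $\int\BA_p(\Bu):(\Bu\otimes\nabla\phi_R)$ and $\int\BA_p(\Bu):\D(\BPsi_R)$ are controlled by H\"older and the Bogovskii bound $\|\nabla\BPsi_R\|_{L^p(A_R)}\lesssim\|\Bu\cdot\nabla\phi_R\|_{L^p(A_R)}$; the convective part simplifies via $\div\Bu=0$ and no-slip to $-\tfrac{1}{2}\int|\Bu|^2\Bu\cdot\nabla\phi_R$, bounded by $R^{-1}\|\Bu\|_{L^3(A_R)}^{3}$, and its $\BPsi_R$-analogue is handled similarly. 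Because $\Bu$ vanishes on $z=0,1$, a vertical Poincar\'e inequality with $R$-independent constant converts $L^q(A_R)$ norms of $\Bu$ into $L^q(A_R)$ norms of $\nabla\Bu$.

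The crux, and the step I expect to be the main obstacle, is then estimating $\|\nabla\Bu\|_{L^q(A_R)}$ by $\|\D(\Bu)\|_{L^q(A_R)}$ via Korn's inequality on the annular slab $A_R$ with a constant independent of $R$. A naive isotropic rescaling of a fixed reference domain fails, since it would alter the slab height (fixed at $1$). The remedy I would pursue is to cover $A_R$ by congruent pieces of the form (unit horizontal patch)$\,\times(0,1)$, obtained by translation and rotation of a single reference block, whose cardinality may grow with $R$ but whose individual geometry is fixed; Korn and Bogovskii on the reference block, glued via the vanishing trace at $z=0,1$, then furnish the required uniform-in-$R$ constants. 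This is the new ingredient on Korn's inequality flagged in the introduction.

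Assembling the estimates yields a Saint--Venant inequality of the form
\[
E(R)\leq C R^{-\alpha}\bigl(E(2R)-E(R)\bigr)^{\beta}+C\bigl(E(2R)-E(R)\bigr)
\]
for exponents $\alpha,\beta>0$ depending only on $p$. Since $E(\infty)<\infty$ by hypothesis, $E(2R)-E(R)\to 0$ and hence $E(R)\to 0$; standard iteration of such an inequality then forces $E(R)\equiv 0$. Consequently $\D(\Bu)\equiv 0$ almost everywhere on $\Omega$, and the no-slip boundary together with Korn's inequality yields $\Bu\equiv 0$.
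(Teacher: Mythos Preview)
Your strategy coincides with the paper's: test the equation with $\Bu$ times a horizontal cutoff, correct for the divergence via Bogovskii on the annular region, and use vertical Poincar\'e together with Korn to bound every annular term by $\|\nabla\Bu\|_{L^2}^2+\|\D(\Bu)\|_{L^p}^p$ there; under the finite-Dirichlet hypothesis these tails vanish and the conclusion follows. Two implementation details differ from the paper, one innocuous and one in need of repair.

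Your block-covering route to a uniform-in-$R$ Korn constant is valid and is a legitimate alternative to the paper's device, which instead zero-extends $\Bu$ from $Z_R=(B_R\setminus\overline{B_{R/2}})\times(0,1)$ to $Q_R=(B_R\setminus\overline{B_{R/2}})\times(0,R)$ and applies Korn on $Q_R$ with a scale-invariant constant obtained by rescaling from $Q_1$. Your parallel claim for Bogovskii, however, is not correct: the mean-zero condition does not localize to the individual blocks, so block-wise Bogovskii cannot be glued to produce a uniform constant, and in fact the Bogovskii constant on $A_R$ (equivalently on the paper's $Z_R$) scales like $R$, as recorded in Lemma~\ref{Bogovskii}(3). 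This is harmless for your argument, because $|\nabla\phi_R|\lesssim R^{-1}$ exactly compensates, yielding $\|\nabla\BPsi_R\|_{L^p(A_R)}\le CR\,\|\Bu\cdot\nabla\phi_R\|_{L^p(A_R)}\le C\|\Bu\|_{L^p(A_R)}$, which is all you actually use downstream. Finally, no Saint--Venant iteration is needed at the end: once every term on the right is supported in the annulus, letting $R\to\infty$ gives directly that the full Dirichlet integral is $\le 0$, hence $\D(\Bu)\equiv 0$ and $\Bu\equiv 0$.
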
	
\begin{remark}
		When $p=2$,  it is  the Liouville-type  theorem for D-solutions to the Navier-Stokes equations in \cite{BPZZ}.
		In this sense, our Theorem \ref{DS} is a generalization of Theorem 1.1 in \cite{BPZZ}.
	\end{remark}
    
	Then, we show the Liouville-type theorem for the axisymmetric solution in a slab.
	\begin{theorem}\label{th:01}
		Assume that $2\leq p<+\infty$ and $\Omega= \mathbb{R}^2\times (0, 1)$. Let $\Bu$ be an axisymmetric weak solution of the equations \eqref{eqsteadynNfs} in a slab $\Omega=\mathbb{R}^{2}\times (0,1)$ with no-slip boundary conditions \eqref{noslipboun}.	
		
		
		
		$(a_1)$ For $2\leq p<3$, if $\Bu$ satisfies
		\begin{equation}\label{LIes1}
			\lim\limits_{R\rightarrow +\infty} R^{-\frac{1}{3-p}}\sup_{z\in[0,1]}|\Bu(R,z)|=0,
		\end{equation}
		then $\Bu \equiv 0$.
        
		$(a_2)$ For $p\ge 3$, there exists a constant $C(p) > 0$, such that if $\Bu$ satisfies
		
		\begin{equation}\label{LIes2}
			\lim\limits_{R\rightarrow +\infty} e^{-\frac{ R}{C(p)}}\sup_{z\in[0,1]}|\Bu(R,z)|=0,
		\end{equation}		
        then $\Bu \equiv 0$.
    
	\end{theorem}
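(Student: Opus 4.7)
The plan is to establish a Saint-Venant type estimate for the local Dirichlet integral
\begin{equation*}
F(R) := \int_{\Omega_{R}} \bigl(|\D(\Bu)|^{p}+|\nabla\Bu|^{2}\bigr)\,\dd\Bx, \qquad \Omega_{R} := \{\Bx\in\Omega\colon r<R\},
\end{equation*}
and to conclude via an ODE comparison argument, following the framework of \cite{aBGWX}. Concretely, I would show that any nontrivial weak solution forces $F(R)$ to grow faster than the rate permitted by \eqref{LIes1} or \eqref{LIes2}; since the hypotheses rule out such growth, we must have $F\equiv 0$, and the no-slip boundary condition then yields $\Bu\equiv 0$.

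The first step is to construct a localized, divergence-free, axisymmetric test function $\Bvphi_{R}$. Take a smooth radial cutoff $\eta_{R}(r)$ equal to $1$ on $\{r\le R\}$ and vanishing on $\{r\ge 2R\}$, with $|\nabla\eta_{R}|\le C/R$. Since $\eta_{R}\Bu$ is not divergence-free, I would correct it by solving a Bogovskii-type inverse-divergence problem on the annular slab $\Omega_{2R}\setminus\Omega_{R}$ to absorb $\Bu\cdot\nabla\eta_{R}$. A rescaling in the radial variable maps this annular slab onto a domain of bounded aspect ratio, so the relevant Bogovskii norm can be controlled with constants independent of $R$. Testing the weak formulation against $\Bvphi_{R}$, using $\div\Bu=0$ to rewrite the convection term as $\tfrac{1}{2}|\Bu|^{2}\Bu\cdot\nabla\eta_{R}$ (plus a lower-order Bogovskii correction), and invoking Korn's inequality on $\Omega_{2R}\setminus\Omega_{R}$ with the no-slip condition on $z=0,1$, I expect a Saint-Venant inequality of the schematic form
\begin{equation*}
F(R) \le C\sup_{\Omega_{2R}}|\Bu|\,\bigl(F'(R)^{(p-1)/p} + F'(R)^{1/2}\bigr) + CR\sup_{\Omega_{2R}}|\Bu|^{3}.
\end{equation*}

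The final step is an ODE analysis. When $2\le p<3$, the $F'(R)^{1/2}$ and convection terms dominate, and integration of the inequality gives $F(R)\gtrsim R^{1/(3-p)}$ unless $\sup_{\Omega_{2R}}|\Bu|$ itself grows at this polynomial rate; the hypothesis \eqref{LIes1} rules out the latter. When $p\ge 3$, the $F'(R)^{(p-1)/p}$ term is dominant in the relevant range, and the inequality reduces to essentially $F(R)\le C(p)F'(R)$, forcing $F(R)\gtrsim e^{R/C(p)}$ unless $\sup|\Bu|$ has comparable exponential growth, which \eqref{LIes2} precludes. The cross-over at $p=3$ is therefore built into the balance between the two H\"older exponents on the right-hand side.

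The principal obstacle is the non-homogeneous structure of $\BA_{p}(\Bu)\colon\D(\Bu)$, which behaves as a mixture of $|\D(\Bu)|^{p}$ and $|\D(\Bu)|^{2}$ rather than a single power; this is what forces the Saint-Venant inequality to carry two distinct exponents and explains the threshold $p=3$ separating the polynomial and exponential regimes. The second delicate point, which the abstract identifies as a new ingredient, is controlling the constant in Korn's inequality on the annular slab $\Omega_{2R}\setminus\Omega_{R}$ uniformly in $R$: because this domain becomes highly elongated, one must exploit both the no-slip boundary on $z=0,1$ and a careful radial rescaling to avoid a blow-up of the Korn constant. Once these two ingredients are in hand, the test function construction and the ODE argument proceed along lines parallel to the Navier-Stokes case.
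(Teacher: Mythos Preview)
Your overall framework is right---Saint-Venant estimate plus ODE comparison, with the two technical inputs you name (mixed $|\D(\Bu)|^{2}$/$|\D(\Bu)|^{p}$ structure and uniform Korn)---but the specific geometry you propose would not yield the theorem. The key point you are missing is that axisymmetry lets one work on a \emph{width-one} annulus $\OR=(B_{R}\setminus B_{R-1})\times(0,1)$, which reduces (in $(r,z)$ coordinates) to the unit square $D_{R}=(R-1,R)\times(0,1)$. On this fixed-size square the Bogovskii and Korn constants are genuinely $R$-independent, and the cutoff $\varphi_{R}$ with $|\nabla\varphi_{R}|\le 1$ gives $Y'(R)=\int_{\OR}(|\nabla\Bu|^{2}+|\D(\Bu)|^{p})$ exactly. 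Your width-$R$ annulus $\Omega_{2R}\setminus\Omega_{R}$ has aspect ratio $R:1$ in $r$ versus $z$; a radial rescaling does \emph{not} fix this, because an isotropic dilation shrinks the $z$-interval to $O(1/R)$, while an anisotropic one does not preserve the divergence structure. In fact the Bogovskii constant on such a domain scales like $R$ (this is Lemma~\ref{Bogovskii}(3) in the paper), and the resulting estimates are those of Section~\ref{Sec4}, which only give $(\ln R)^{3/(3-p)}$ growth and require the much stronger hypothesis $\sup r|u^{r}|<\infty$ of Theorem~\ref{th:02}. They cannot reach the $R^{1/(3-p)}$ threshold of Theorem~\ref{th:01}.

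Two further points. First, your Saint-Venant inequality carries a free-standing term $CR\sup|\Bu|^{3}$ not tied to $F'$; as written this blocks the ODE argument. The paper instead writes $|\Bu|^{3}\le\|\Bu\|_{L^{\infty}}^{4-p}|\Bu|^{p-1}$ (for $2\le p<3$) and controls $\|\Bu\|_{L^{p-1}}$ by $\|\D(\Bu)\|_{L^{p}}$ via Poincar\'e in $z$ plus Korn, so the convection term becomes $CR^{1/p}\|\Bu\|_{L^{\infty}}^{4-p}[Y'(R)]^{(p-1)/p}$. Second, the ODE step is a two-pass bootstrap rather than a single integration: one first derives a Saint-Venant inequality \emph{without} any $L^{\infty}$ input (Proposition~\ref{IG1}) to obtain the a priori lower bound $Y(R)\gtrsim R^{(6-p)/(3-p)}$ (not $R^{1/(3-p)}$) for nontrivial solutions; this lower bound on $Y'(R)$ is then fed back into the $L^{\infty}$-based inequality to collapse the two exponents $1/2$ and $(p-1)/p$ into the single dominant one, after which integration over $[R,\infty)$ produces the contradiction with \eqref{LIes1} or \eqref{LIes2}.
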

	
	We have the following remark on Theorem \ref{th:01}.
	\begin{remark}
		If  $p=2$, then $\frac{1}{3-p}=1$, Theorem \ref{th:01} is consistent with Theorem 1.1 in \cite{aBGWX} for the Navier-Stokes equations.
		In this sense, our Theorem \ref{th:01} is a generalization of Theorem 1.1 in \cite{aBGWX}.
	\end{remark}
	

	Next, we establish a Liouville-type theorem for general flows in the slab.
	
	\begin{theorem}\label{th:02}		
		Let $\Bu$ be a weak solution to the equations \eqref{eqsteadynNfs} in $\Omega=\mathbb{R}^2 \times (0,1)$ with no-slip boundary conditions \eqref{noslipboun}. Then  $\Bu \equiv 0$ if 
    	\begin{equation}\label{3Db}
			\sup_{(r,\theta,z)\in \Omega}|\Bu(r,\theta,z)| < \infty \quad and \quad \sup_{(r,\theta,z)\in \Omega} r|u^r(r,\theta,z)| < \infty.
		\end{equation}
	\end{theorem}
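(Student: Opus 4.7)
The plan is to follow the Saint-Venant scheme from \cite{aBGWX}, adapted to the $p$-Laplacian type diffusion $\BA_p$. Define the local quantity
\[
Y(R) := \int_{\Omega \cap \{r < R\}} \bigl(|\D(\Bu)|^2 + |\D(\Bu)|^p\bigr)\,\dd\Bx,
\]
and aim to derive an inequality for $Y(R)$ that, together with the hypothesis \eqref{3Db}, forces $Y\equiv 0$ and hence $\Bu\equiv 0$ (via Korn/Poincar\'e on the slab and the no-slip condition).

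First, introduce a radial cutoff $\chi_R(r)$ with $\chi_R\equiv 1$ on $\{r\le R\}$ and $\supp\chi_R \subset \{r \le R+1\}$. Because $\chi_R\Bu$ is not solenoidal (its divergence $u^r\chi_R'(r)$ is supported in the annular shell $S_R := \Omega\cap\{R<r<R+1\}$), solve a Bogovski\u{\i} problem on $S_R$ to produce $\Bw_R$ with $\nabla\cdot \Bw_R = -u^r\chi_R'$ and $\Bw_R|_{\p S_R}=0$, together with the estimate $\|\nabla \Bw_R\|_{L^q(S_R)} \le C\|u^r\chi_R'\|_{L^q(S_R)}$. Testing the weak form with $\Bvphi = \chi_R\Bu + \Bw_R$ annihilates the pressure and yields
\[
\int_\Omega \BA_p(\Bu):\D(\Bu)\,\chi_R\,\dd\Bx = \mathcal{R}(R),
\]
where $\mathcal{R}(R)$ collects the remainder terms supported on $S_R$.

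The bulk of the work is estimating $\mathcal{R}(R)$. Its principal constituents are (i) $\int_{S_R}\BA_p(\Bu):(\Bu\otimes \nabla\chi_R)_{\text{sym}}\,\dd\Bx$, (ii) $\int_{S_R}\BA_p(\Bu):\D(\Bw_R)\,\dd\Bx$, and (iii) the cubic convective pieces $\tfrac12\int_{S_R}|\Bu|^2 u^r \chi_R'\,\dd\Bx$ and $\int_{S_R}[(\Bu\cdot\nabla)\Bu]\cdot \Bw_R\,\dd\Bx$. The uniform bounds $|\Bu|\le M$ and $|u^r|\le M/r\le M/R$ on $S_R$ provide an explicit $1/R$ decay to offset the $|S_R|\sim R$ volume growth, and the Bogovski\u{\i} bound transfers this decay to $\Bw_R$, rendering (ii) and the corrector piece of (iii) subleading. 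A further ingredient, as noted in the abstract, is a Korn-type inequality on the truncated slab $\Omega\cap\{r<R\}$ with a constant independent of $R$; this is available because the no-slip direction $x_3$ has bounded extent, and it is what lets us pass from $\D(\Bu)$ back to $\nabla\Bu$ uniformly in $R$.

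Combining the above produces a Saint-Venant inequality of the schematic form
\[
Y(R) \le C\bigl(Y(R+1) - Y(R)\bigr) + o(1) \quad \text{as } R\to\infty,
\]
which upon iteration forces $Y$ either to vanish identically or to grow at least exponentially. A crude global bound $Y(R)\le CR^2$ (from $|\Bu|\le M$ and $|\Omega\cap\{r<R\}|\le CR^2$) rules out exponential growth, so $Y\equiv 0$, and then no-slip plus the uniform Korn/Poincar\'e inequality deliver $\Bu\equiv 0$. The principal obstacle will be the cubic convective term in $\mathcal{R}(R)$: without the assumption $r|u^r|\le M$ one only has $|\Bu|^2|u^r|\le M^3$ on $S_R$, which gives a non-decaying contribution of order $R$ and breaks the Saint-Venant estimate. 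Exploiting $|u^r|\le M/r$ in a manner compatible with the nonlinear structure of $\BA_p$ (as opposed to the linear Laplacian of the Navier-Stokes case in \cite{aBGWX}) is where the analysis becomes delicate, and I expect it to require a careful interpolation between the dissipation $\int \chi_R|\D(\Bu)|^p\,\dd\Bx$ and the algebraic growth of the annular volume, handled in tandem with the $R$-uniform Korn inequality.
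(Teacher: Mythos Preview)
Your outline follows the right Saint-Venant scheme, but it has a genuine gap that prevents it from closing, and a technical choice that differs from the paper in a way that matters.

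\textbf{The fatal gap.} Your ``crude global bound $Y(R)\le CR^2$ (from $|\Bu|\le M$ and $|\Omega\cap\{r<R\}|\le CR^2$)'' is simply wrong: the hypothesis $|\Bu|\le M$ controls $\int_{\{r<R\}}|\Bu|^2$, not $\int_{\{r<R\}}|\D(\Bu)|^2$ or $\int_{\{r<R\}}|\D(\Bu)|^p$. There is no a priori reason the local Dirichlet integral of a bounded weak solution should grow at most polynomially, and without such an upper bound your dichotomy ``$Y\equiv 0$ or exponential growth'' does not yield a contradiction. The paper does \emph{not} try to bound $Y(R)$ from above; instead it runs a two-stage argument. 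First (Proposition~\ref{IG2}), \emph{without} using the $L^\infty$ bounds, one derives a differential inequality of the form $\mathcal{Y}(R)\le \widetilde{C}_1 R\,\mathcal{Y}'(R)+\widetilde{C}_2 R^{3/p}[\mathcal{Y}'(R)]^{3/p}$ (or $\mathcal{Y}(R)\le \widetilde{C}_3 R\,\mathcal{Y}'(R)$ for $p\ge 3$), which forces a \emph{lower} bound $\mathcal{Y}'(R)\ge cR^{-1}(\ln R)^{p/(3-p)}$ if $\Bu\not\equiv 0$. Second, the $L^\infty$ hypotheses \eqref{3Db} give a refined inequality which, \emph{after inserting that lower bound on $\mathcal{Y}'(R)$ to absorb the worst term}, collapses to $\mathcal{Y}(R)\le C R^{(p-1)/p}[\mathcal{Y}'(R)]^{(p-1)/p}$. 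Integrating this from $R_1$ to $R_2$ gives $[\mathcal{Y}(R_1)]^{-1/(p-1)}\ge C\ln(R_2/R_1)\to\infty$, the contradiction. The interplay between the two inequalities is the mechanism that replaces your missing upper bound.

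\textbf{Thin versus thick shell.} You work on the width-one shell $S_R=\{R<r<R+1\}\times(0,1)$ and assert that the Bogovski\u{\i} and Korn constants there are $R$-uniform. For the \emph{axisymmetric} case the paper does exactly this (on $\mathscr{O}_R$), exploiting the symmetry to reduce to a fixed two-dimensional rectangle. For the \emph{general} three-dimensional field treated in Theorem~\ref{th:02}, however, the paper deliberately switches to the thick shell $Z_R=(B_R\setminus\overline{B_{R/2}})\times(0,1)$: the Bogovski\u{\i} constant there is $CR$ (Lemma~\ref{Bogovskii}(3)), and the $R$-uniform Korn inequality is obtained by extending $\Bu$ by zero to the scaled domain $Q_R=(B_R\setminus\overline{B_{R/2}})\times(0,R)$ and rescaling (Lemma~\ref{korn}, Case~(2)). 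On your thin shell, whose circumference-to-thickness ratio is $\sim R$, neither constant is established uniformly for non-axisymmetric fields, so the estimates you write down for (ii) and for the corrector part of (iii) are not justified as stated.
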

        
		


	Now we outline  the proof for the main results.  Inspired by the work  \cite{aBGWX}, we establish various Saint-Venant estimates for the Dirichlet integral of  $\Bu$, i.e. $\int (|\nabla\Bu|^{2}+|\D(\Bu)|^p)\,\dd \Bx$, over  finite subdomains and characterize the growth of the nontrivial solutions. One new ingredient is the estimate for the constant in Korn's inequality over different domains. The Saint-Venant's principle was initially used in \cite{JKKTARMA66,RATARMA65} to study the solutions of elastic equations.  Then it was applied to study the well-posedness of the solutions to the stationary Navier-Stokes equations in a cylinder\cite{LSZNSL80}. Recently, the idea was adapted to prove the Liouville-type theorems for the solutions of  stationary Navier-Stokes equations (cf. \cite{aBGWX,aBYA,KTWJFM24}).

	The rest of this paper is organized as follows. In Section  \ref{Sec2}, we introduce the Bogovskii map,  some notations and inequalities. The Liouville-type theorems for axisymmetric solutions are proved in Section \ref{Sec3}. Section \ref{Sec4}  is devoted to the Liouville-type theorems for general solutions.

	\section{PRELIMINARIES}\label{Sec2} 
	Some  notations are given below. 
	We denote the cut-off domain $D_{R}=(R-1,R)\times (0,1)$, $\mathcal{D}_R =(R-1, R)\times(0, 2\pi)\times(0, 1)$, $\OR=(B_{R}\setminus \overline{B_{R-1}})\times (0,1)$, $Z_R=(B_{R}\setminus \overline{B_{\frac{R}{2}}})\times (0,1)$ and $Q_R=(B_{R}\setminus \overline{B_{\frac{R}{2}}})\times (0,R)$, where $B_{R}=\left\{(x_{1},x_{2})\in\mathbb{R}^{2}: x^{2}_{1}+x^{2}_{2}<R^2\right\}$.
	
	\begin{definition}
		Let $1\leq p \leq +\infty$ and  $D $ be a bounded domain in $\mathbb{R}^n$. Denote
		\[
		L^p_0(D)=\left\{v(\Bx):v\in L^p(D), \int_D v(\Bx)\,\dd \Bx=0\right\}. 
		\]
	\end{definition}
	
	Then we introduce the Bogovskii map, which gives a solution to the divergence equation.
	\begin{lemma}\label{Bogovskii}
		Let $D$ be a bounded Lipschitz domain in $\mathbb{R}^n$, $n\geq 2 $.  For any $p\in (1, +\infty)$, there is a linear map $\boldsymbol{\Phi}$ that maps a scalar function $g\in L^p_0(D)$ to a vector field $\bBV = \boldsymbol{\Phi} g \in W_0^{1, p}(D; \mathbb{R}^n)$ satisfying
		\be \nonumber
		{\rm div}~\bBV = g \ \text{in}\,\, D \quad \text{and} \quad \|\bBV\|_{W_{0}^{1, p}(D)} \leq C (D, p) \|g\|_{L^p(D)}.
		\ee
		
		In particular,
		\begin{enumerate}
			\item   For any $g \in L^p_0(D_R)$,
			the  vector valued function $\bBV = \boldsymbol{\Phi} g \in W_0^{1,p}(D_R; \mathbb{R}^2)$ satisfies
			\be \nonumber
			\partial_r V^r + \partial_z V^z =g \ \  \mbox{in}\,\, D_R
			\quad
			\text{and}
			\quad
			\|\widetilde{\nabla } \bBV\|_{L^p(D_R)}
			\leq C \|g\|_{L^p(D_R)},
			\ee
			where $\widetilde{\nabla } = (\partial_r, \ \partial_z ) $ and $C$ is a constant independent of $R$.
			
			\item For any $g \in L_0^p(\mathcal{D}_R)$,
			the   vector valued function $\bBV = \boldsymbol{\Phi} g \in W_0^{1,p}( \mathcal{D}_R; \mathbb{R}^3)$ satisfies
			\be \nonumber
			\partial_r V^r + \partial_\theta V^\theta +  \partial_z V^z =g \ \   \mbox{in}\,\, \mathcal{D}_R
			\quad
			\text{and}
			\quad
			\|\overline{\nabla } \bBV\|_{L^p(\mathcal{D}_R)}
			\leq C \|g\|_{L^p( \mathcal{D}_R)},
			\ee
			where $\overline{\nabla} = (\partial_r, \ \partial_\theta, \ \partial_z ) $ and $C$ is a constant independent of $R$.
			\item  For any $g \in L_0^p(Z_R)$,
			the vector valued function $\bBV = \boldsymbol{\Phi} g \in W_0^{1,p}(Z_R ; \mathbb{R}^3)$ satisfies
			\be \nonumber
			{\rm div}~\bBV = g \ \   \mbox{in}\,\, Z_R
			\quad
			\text{and}
			\quad
			\|\nabla \bBV\|_{L^p(Z_R)}
			\leq CR \|g\|_{L^p(Z_R)},
			\ee
			where $C$ is a constant independent of $R$.
		\end{enumerate}

		Part (1) and (2) can be found in  \cite[Lemma 2.1]{aBGWX},  Part (3) is given in \cite[Proposition 2.1]{BPZZ}.
    Its general form is due to    Bogovskii\cite{BM}, see \cite[Section III.3]{GAGP11} and \cite[Section 2.8]{TT18}.
		\end{lemma}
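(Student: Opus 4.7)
The plan is to deduce all three parts from the classical Bogovskii theorem: on a bounded Lipschitz domain $D \subset \mathbb{R}^n$, there exists a linear operator $\boldsymbol{\Phi}$ sending $L^p_0(D)$ to $W^{1,p}_0(D;\mathbb{R}^n)$ with $\mathrm{div}(\boldsymbol{\Phi} g) = g$ and $\|\boldsymbol{\Phi} g\|_{W^{1,p}_0(D)} \le C(D,p)\|g\|_{L^p(D)}$. The proof of this abstract statement is standard: for a domain star-shaped with respect to a ball, Bogovskii's explicit kernel gives a singular-integral representation for $\nabla \boldsymbol{\Phi} g$; the kernel is of Calder\'on--Zygmund type with a compactly supported smooth cutoff, and is therefore bounded on $L^p$ for all $1 < p < \infty$. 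A general bounded Lipschitz domain is handled by decomposing into finitely many star-shaped pieces via a partition of unity. I would invoke this classical result as a black box and focus attention on the scaling behavior of the constants in the three specific geometries at hand.

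For parts (1) and (2), the key observation is that $D_R = (R-1,R) \times (0,1)$ and $\mathcal{D}_R = (R-1,R) \times (0,2\pi) \times (0,1)$ are isometric translates, under the shift $r \mapsto r - R + 1$, of the fixed reference domains $(0,1)^2$ and $(0,1) \times (0,2\pi) \times (0,1)$. Since translations preserve both $L^p$ and $W^{1,p}$ norms, the Bogovskii constant is inherited from the reference domain and is therefore independent of $R$. The identification of $\partial_r V^r + \partial_z V^z$ and $\partial_r V^r + \partial_\theta V^\theta + \partial_z V^z$ with the Cartesian divergence on the reference domains is tautological once one views $(r,z)$ (respectively $(r,\theta,z)$) simply as Cartesian coordinates on the shifted slab/box; no Jacobian from the cylindrical change of variables enters, because we are working in the original Cartesian coordinates and only using $r$ as a label.

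Part (3) is more delicate since $Z_R = (B_R \setminus \overline{B_{R/2}}) \times (0,1)$ cannot be reduced to a fixed domain: it has two incommensurable scales (horizontal diameter $R$ and vertical thickness $1$), so a uniform Bogovskii constant is impossible and the $R$-growth must be quantified. My plan is to cover $Z_R$ by an overlapping chain of $O(R)$ subdomains $\{U_i\}$ of uniformly bounded size (for instance, boxes in $(r,\theta,z)$ of unit scale in each coordinate, periodizing in $\theta$), and to decompose $g = \sum_i g_i$ with $\mathrm{supp}\, g_i \subset U_i$ and $\int g_i = 0$ on each piece. Since the local pieces of $g$ obtained from a partition of unity will not have zero mean, one has to transport mass between adjacent $U_i$ by subtracting a reference flux field. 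Applying the uniform Bogovskii estimate (from parts (1)--(2) type arguments) on each $U_i$ and summing yields a bound proportional to the length of the transport chain, i.e.\ $O(R)$. This reproduces the estimate of \cite[Proposition 2.1]{BPZZ}.

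The main obstacle I anticipate is precisely the sharp $R$-dependence in Part (3). A naive uniform rescaling $\Bx \mapsto \Bx/R$ would collapse the vertical dimension to $1/R$ and leave anisotropic powers of $R$ in the wrong places. The covering/transport approach avoids this, but the bookkeeping for the mean-zero corrections has to be done carefully so that the accumulated error is genuinely linear in $R$ and no worse; that step is where I would expect to spend most of the effort.
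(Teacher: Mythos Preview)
The paper does not actually prove this lemma: it states the result and immediately cites \cite[Lemma 2.1]{aBGWX} for parts (1)--(2), \cite[Proposition 2.1]{BPZZ} for part (3), and the standard references (Bogovskii, Galdi, Tsai) for the general form. There is therefore no in-paper argument to compare against; your proposal is essentially a sketch of what those cited proofs contain.

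Your treatment of (1) and (2) is correct and is precisely the standard argument: $D_R$ and $\mathcal{D}_R$ are translates of fixed reference boxes, translations are isometries of $L^p$ and $W^{1,p}$, and the operators $\partial_r V^r+\partial_z V^z$ and $\partial_r V^r+\partial_\theta V^\theta+\partial_z V^z$ are simply the Cartesian divergence in those variables. Nothing more is needed.

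For part (3) your overall strategy (decompose, solve locally, transport mass along a chain) is the right one, but the specific covering you propose is internally inconsistent. A box of ``unit scale in each coordinate $(r,\theta,z)$'' sits at radius $r\sim R$ and therefore has physical angular side of arc-length $r\cdot 1\sim R$; such a box has physical dimensions roughly $1\times R\times 1$, is \emph{not} of uniformly bounded size, and carries a Bogovskii constant of order $R$ by itself, which defeats the purpose of the local step. The clean fix is to cover $Z_R$ by genuine unit cubes in $\mathbb{R}^3$. There are then $O(R^2)$ pieces, not $O(R)$, but what controls the final constant is the chain length (the diameter of the overlap graph), which is indeed $O(R)$; this is exactly what you invoke in your last sentence. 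With that correction your argument goes through and recovers the bound $\|\nabla \bBV\|_{L^p(Z_R)}\le CR\|g\|_{L^p(Z_R)}$ claimed in the lemma, matching the content of the cited \cite{BPZZ} result.
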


	Next, we give the Korn's inequality due to V.A. Kondrat'ev and O.A. Oleinik\cite[Theorem 4]{OAVA}. 
\begin{lemma}\label{Korn F}
Let $ \Bu \in W^{1,p}(D)$, assume that the domain \(D\) has a Lipschitz boundary, and that \( \Bu \in V \), where the linear space \( V \), weakly closed in \( W^{1,p}(D) \), is such that \( V \cap \mathfrak{M} = \{0\} \), where \(\mathfrak{M}\) is the set of rigid translations, that is, functions of the type \( A\Bx + B \); \( A \) is a skew-symmetric matrix with constant entries and \( B \) is a constant vector. Then for \( \Bu(x) \) we have a Korn inequality of the form
\begin{equation}\label{kk}
\|\nabla\Bu\|_{L^p(D)}	\leq C\|\D(\Bu)\|_{L^{p}(D)}, \quad 1 < p < +\infty,
\end{equation}
where the constant \( C \) does not depend on \( \Bu \).
\end{lemma}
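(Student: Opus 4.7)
The plan is to derive the inequality from the classical Korn second inequality on a bounded Lipschitz domain,
\begin{equation}\label{kornII}
\|\nabla \Bu\|_{L^{p}(D)}\leq C_{0}\bigl(\|\Bu\|_{L^{p}(D)}+\|\D(\Bu)\|_{L^{p}(D)}\bigr),\qquad 1<p<+\infty,
\end{equation}
and then to remove the lower order term $\|\Bu\|_{L^{p}(D)}$ by a compactness argument that exploits the algebraic hypothesis $V\cap \mathfrak{M}=\{0\}$. The estimate \eqref{kornII} itself is the hard input; I would take it from Ne\v{c}as' theorem identifying $L^{p}(D)$ with the space of distributions whose gradient lies in $W^{-1,p}(D)$, combined with the identity expressing each second derivative $\partial_{i}\partial_{j}u^{k}$ as a linear combination of first derivatives of entries of $\D(\Bu)$.

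Granting \eqref{kornII}, I proceed by contradiction. If no single constant $C$ makes the inequality of the lemma valid on $V$, then for each $n\in\mathbb{N}$ one picks $\Bu_{n}\in V$ with
\[
\|\nabla \Bu_{n}\|_{L^{p}(D)}> n\,\|\D(\Bu_{n})\|_{L^{p}(D)}.
\]
Rescaling so that $\|\Bu_{n}\|_{W^{1,p}(D)}=1$ preserves membership in $V$ and forces $\|\D(\Bu_{n})\|_{L^{p}(D)}\to 0$. By the Rellich--Kondrachov theorem I extract a subsequence, still denoted $\Bu_{n}$, with $\Bu_{n}\rightharpoonup \Bu$ weakly in $W^{1,p}(D)$ and $\Bu_{n}\to \Bu$ strongly in $L^{p}(D)$. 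Weak closedness of $V$ gives $\Bu\in V$, and weak lower semicontinuity of the $L^{p}$ norm together with $\|\D(\Bu_{n})\|_{L^{p}(D)}\to 0$ forces $\D(\Bu)=0$. Hence $\Bu$ is an infinitesimal rigid motion, i.e.\ $\Bu\in\mathfrak{M}$, so $\Bu\in V\cap\mathfrak{M}=\{0\}$.

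To close the argument I upgrade the convergence to strong convergence in $W^{1,p}(D)$. Applying \eqref{kornII} to the difference $\Bu_{n}-\Bu_{m}$ yields
\[
\|\nabla(\Bu_{n}-\Bu_{m})\|_{L^{p}(D)}\leq C_{0}\bigl(\|\Bu_{n}-\Bu_{m}\|_{L^{p}(D)}+\|\D(\Bu_{n})\|_{L^{p}(D)}+\|\D(\Bu_{m})\|_{L^{p}(D)}\bigr),
\]
whose right-hand side vanishes in the limit because $\Bu_{n}\to 0$ in $L^{p}(D)$ and $\|\D(\Bu_{n})\|_{L^{p}(D)}\to 0$. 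Therefore $\{\Bu_{n}\}$ is Cauchy in $W^{1,p}(D)$ and converges strongly to $0$, contradicting $\|\Bu_{n}\|_{W^{1,p}(D)}=1$.

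The principal obstacle is the Korn second inequality \eqref{kornII} itself: the compactness finish is standard functional analysis, but \eqref{kornII} for arbitrary $1<p<+\infty$ on Lipschitz domains is delicate and rests on Ne\v{c}as-type regularity for the divergence operator. Since the authors attribute Lemma \ref{Korn F} to \cite{OAVA}, I would likewise invoke that reference for \eqref{kornII} rather than reproduce its proof, and reserve the independent work for the quantitative tracking of the Korn constant over $D_R$, $\mathcal{D}_R$, $\OR$ and $Z_R$ that is announced in the introduction as a new ingredient of the paper.
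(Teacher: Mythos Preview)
Your proposal is correct, but note that the paper does not actually prove this lemma: it is stated as a known result due to Kondrat'ev and Oleinik \cite{OAVA} and simply cited. Your sketch---Korn's second inequality \eqref{kornII} plus a contradiction/compactness argument to eliminate the lower-order term via $V\cap\mathfrak{M}=\{0\}$---is the standard route and is essentially how such results are proved in the literature (including in \cite{OAVA}), so there is no real divergence of approach; you have simply supplied the argument that the paper chose to outsource. Your closing remark is apt: the paper's own work begins with Lemma~\ref{korn}, where the constant is tracked over the specific domains $\mathcal{D}_R$, $\OR$, and $Q_R$.
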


Subsequently, we give a particular version of  Korn's inequality for the axisymmetric  and general vector fields in the cylindrical domains, which will be used frequently in  
Section  \ref{Sec3} and Section  \ref{Sec4}.
\begin{lemma}[Korn's inequality in  the cylindrical domains]\label{korn}
\, \\
$\text{Case} (1) $. Assume that $\Bu $ is an axisymmetric vector field, which belongs to $W^{1,p}(\OR)$, and $\Bu=0 $ on $\Gamma_1=\{(x_1,x_2,x_3)\in \p \OR:x_3=0,1 \}$. Then there exist some constants $R_0>0$ and $C>0$, such that when $R \ge R_0$, it holds that 
\begin{equation}\label{yski}
    \begin{aligned}
      \| \nabla \Bu \|_{L^p(\OR)} \leq C \| \D(\Bu) \|_{L^p(\OR)}. 
    \end{aligned}
\end{equation}
Here $C$ is a universal constant independent of $R$ and $\Bu$.\\
$\text{Case} (2)$. Assume that $\Bu \in W^{1,p}(Q_R)$, and $\Bu=0 $ on $\Gamma_2=\{(x_1,x_2,x_3)\in \p Q_R:x_3=0,R \}$. Then there exists a constant $C>0$, which is independent of  $R$ and $\Bu$, such that 
		\begin{equation}\label{ki2}
			\|\nabla\Bu\|_{L^p(Q_R)}\leq C\|\D(\Bu)\|_{L^{p}(Q_R)}.
		\end{equation}
\end{lemma}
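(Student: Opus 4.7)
The plan is to reduce each case to the general Korn inequality of Lemma~\ref{Korn F} applied on a domain whose shape does not depend on $R$, so that the constant the lemma produces is automatically uniform in $R$. For Case~(2) the reduction is by isotropic rescaling; for Case~(1), where the slab has fixed thickness and rescaling is unavailable, I instead use axisymmetry to pass to a two-dimensional problem on a fixed square and treat the residual cylindrical $1/r$ corrections as a small perturbation.

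Case~(2) is essentially immediate. Setting $\hat\Bx=\Bx/R$ and $\hat\Bu(\hat\Bx)=\Bu(R\hat\Bx)$ sends $Q_R$ onto the fixed Lipschitz domain $\hat Q:=(B_1\setminus\overline{B_{1/2}})\times(0,1)$ and turns the Dirichlet condition at $z=0,R$ into $\hat\Bu=0$ on $\hat z=0,1$. Any rigid motion vanishing on two distinct horizontal planes vanishes identically, so Lemma~\ref{Korn F} applied on $\hat Q$ yields a fixed constant $C=C(\hat Q,p)$ with $\|\nabla\hat\Bu\|_{L^p(\hat Q)}\le C\|\D(\hat\Bu)\|_{L^p(\hat Q)}$. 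A direct change-of-variables computation shows that both sides carry the common factor $R^{(p-3)/p}$ when undone, so (\ref{ki2}) follows on $Q_R$ with the same $C$, independent of $R$.

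For Case~(1), axisymmetry gives $\int_{\OR}|\nabla\Bu|^p\,\dd\Bx = 2\pi\int_{D_R}|\nabla\Bu|^p\,r\,\dd r\,\dd z$ and similarly for $\D(\Bu)$; since $r\in[R-1,R]$ for $R\ge R_0$, the weight $r$ is comparable to a single positive constant uniformly in $R$, and the translation $s=r-(R-1)$ identifies $D_R$ with the fixed square $D=(0,1)^2$. The task reduces to a uniform Korn-type estimate on $D$ for the three-component field $(u^r,u^\theta,u^z)(s,z)$ vanishing at $z=0,1$, where the cylindrical norms $|\nabla\Bu|^p_{\text{cyl}}$ and $|\D(\Bu)|^p_{\text{cyl}}$ carry $1/r$ terms with $r=R-1+s\ge R-1$.

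I close the estimate by splitting into the planar pair $(u^r,u^z)$ and the swirl $u^\theta$. The relevant part of $|\D|^2_{\text{cyl}}$ dominates the 2D Cartesian strain of $(u^r,u^z)$, so Lemma~\ref{Korn F} applied to $D$ with the Dirichlet condition on $\{z=0,1\}$ (which kills the 2D rigid motions) yields the 2D Korn inequality with a universal constant. For $u^\theta$, the term $(\partial_z u^\theta)^2$ already lies in $|\D|^2_{\text{cyl}}$, and the algebraic bound $|\partial_s u^\theta|^p\le 2^{p-1}|\partial_s u^\theta - u^\theta/r|^p + 2^{p-1}|u^\theta/r|^p$ together with Poincar\'e's inequality $\|u^\theta\|_{L^p(D)}\le C\|\partial_z u^\theta\|_{L^p(D)}$ (from the $z$-Dirichlet data) controls $(\partial_s u^\theta)^p$ up to an error of size $C(R-1)^{-p}\|\D(\Bu)\|^p_{L^p}$, which is absorbed for $R_0$ sufficiently large. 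The analogous argument disposes of the $(u^r/r)$ and $(u^\theta/r)$ contributions to $|\nabla\Bu|^p_{\text{cyl}}$. The principal difficulty is to arrange every invocation of Lemma~\ref{Korn F} so that it acts on a fixed reference domain with a fixed admissible class, so the Kondrat'ev--Oleinik constants are genuine $R$-independent absolute numbers; once this bookkeeping is in place, the $1/r$ smallness leaves ample room to close.
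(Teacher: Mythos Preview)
Your proposal is correct, and for Case~(2) it is identical to the paper's argument. For Case~(1) your strategy coincides with the paper's at the structural level---pass to a fixed reference domain, apply Lemma~\ref{Korn F} there, and absorb the cylindrical $1/r$ corrections via Poincar\'e for $R$ large---but the decomposition you use is slightly different. The paper treats $(u^r,u^\theta,u^z)$ as a three-component vector on the translate box $\mathcal{D}_R=(R-1,R)\times(0,2\pi)\times(0,1)$, applies the full three-dimensional Korn inequality from Lemma~\ref{Korn F} with the ``Cartesian'' operators $\overline{\nabla}=(\partial_r,\partial_\theta,\partial_z)$ and $\overline{\D}$, and then relates $\overline{\nabla},\overline{\D}$ to the true cylindrical $\nabla,\D$ through explicit pointwise identities, picking up the $u^r/r$ and $u^\theta/r$ errors. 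You instead split off the swirl: apply two-dimensional Korn on the unit square to the planar pair $(u^r,u^z)$, and handle $u^\theta$ by reading $\partial_z u^\theta$ and $\partial_r u^\theta - u^\theta/r$ directly out of $|\D(\Bu)|$ together with the scalar $z$-Poincar\'e bound. Your route is marginally more elementary in that it never invokes three-dimensional Korn, only the two-dimensional version plus one-variable Poincar\'e; the paper's route is a bit more systematic in that the single application of Korn does all the work and the subsequent algebra is mechanical. Both close in exactly the same way, by choosing $R_0$ large enough that the $O(R^{-1})$ remainder is absorbed.
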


\begin{proof}Proof of $\text{Case}(1)$. Step 1.
Let $D=\mathcal{D}_R$, one can verify that Lemma \ref{Korn F} are satisfied:
Define the space 
   \[
   V = \{ u \in W^{1,p}(\mathcal{D}_R) : u = 0 \text{ on } \,\Gamma=\{(x_1,x_2,x_3\in \p \mathcal{D}_R:x_3=0,1)\}\}.
   \]
   Since  $V$ is a closed subspace 
   of $W^{1,p}(\mathcal{D}_R)$ and hence weakly closed.
   
   Then  one shows that $V \cap \mathfrak{M} = \{0\}$, where $\mathfrak{M}$ is the space of 
   rigid translations. Let $\Bu(\Bx) = A\Bx + B \in V \cap \mathfrak{M}$, with skew-symmetric matrix $A$  
   and constant vector $B$. The boundary condition $\Bu = 0$ on $x_3 = 0$ and $x_3 = 1$ implies:
   \begin{itemize}
     \item  On  $x_3 = 0$: $A(x_1,x_2,0)^\top + B = 0$ for all $(x_1,x_2)$.
     \item On $x_3 = 1$: $A(x_1,x_2,1)^\top + B = 0$ for all $(x_1,x_2)$.
   \end{itemize}
   Subtracting the  equations  gives $A(0,0,1)^\top = 0$ for all $(x_1,x_2)$, which forces the 
   third column of $A$ to be zero. Since $A$ is skew-symmetric, this implies $A = 0$. 
   Then from the first equation, $B = 0$. Hence $\Bu = 0$. Since all conditions of Lemma \ref{Korn F} are satisfied, the Korn's inequality \eqref{kk} holds in $\mathcal{D}_R$.

Step 2. By Step 1, Korn’s inequality \eqref{kk} holds in $\mathcal{D}_R$. Consequently, one has 
		\begin{equation}\label{Kororv}
     \|\overline{\nabla}\Bu\|_{L^p(\mathcal{D}_R)}\leq C\|\overline{\D}(\Bu)\|_{L^{p}(\mathcal{D}_R)},
		\end{equation}
		where  $\overline{\nabla}=(\p_r,\p_{\theta},\p_z)$, and $\overline{\D}(\Bu)=\frac{1}{2}\left(\overline{\nabla}+\overline{\nabla}^T\right)\Bu $, and $C$ is a universal constant  independent of $R$. 
		
		Since $\Bu$ is axisymmetric, it holds that 
        \be
\begin{aligned}
    |\nabla \Bu|^2&=\left|\p_r u^r\right|^2+\left|\p_r u^{\theta}\right|^2+\left|\p_r u^z\right|^2+\left|\p_z u^r\right|^2+\left|\p_z u^{\theta}\right|^2+\left|\p_z u^z\right|^2+\left|\frac{u^r}{r}\right|^2+\left| \frac{u^{\theta}}{r}\right|^2,\\
    |\D(\Bu)|^2&=\left|\p_r u^r\right|^2+\frac{1}{2}\left|\p_r u^{\theta}-\frac{u^{\theta}}{r}\right|^2+\frac{1}{2}\left|\p_z u^r+\p_r u^z\right|^2+\frac{1}{2}\left|\p_z u^{\theta}\right|^2+\left|\p_z u^z\right|^2+\left|\frac{u^r}{r}\right|^2, \\
     |\overline{\nabla} \Bu|^2&=\left|\p_r u^r\right|^2+\left|\p_r u^{\theta}\right|^2+\left|\p_r u^z\right|^2+\left|\p_z u^r\right|^2+\left|\p_z u^{\theta}\right|^2+\left|\p_z u^z\right|^2,\\
       |\overline{\D}(\Bu)|^2&=\left|\p_r u^r\right|^2+\frac{1}{2}\left|\p_r u^{\theta}\right|^2+\frac{1}{2}\left|\p_z u^r+\p_r u^z\right|^2+\frac{1}{2}\left|\p_z u^{\theta}\right|^2+\left|\p_z u^z\right|^2.
\end{aligned}
        \ee
By straightforward computations, one has
		\be\label{ineqnaDu}
		\begin{aligned}
			|\overline{\nabla}\Bu|^2&=|\nabla \Bu|^2-\left|\frac{u^r}{r}\right|^2-\left| \frac{u^{\theta}}{r}\right|^2,\\
			|\overline{\D}(\Bu)|^{2}&=|\D(\Bu)|^2-\frac{1}{2}\left| \p_r u^{\theta}-\frac{u^{\theta}}{r} \right|^2-\left| \frac{u^r}{r}\right|^2+\frac{1}{2}\left|\p_r u^{\theta}\right|^2,\\
            \left|\p_r u^{\theta}\right|&\le C\left( |\D(\Bu)|+\left|\frac{u^{\theta}}{r}\right|\right).
		\end{aligned}
		\ee
		Hence, by \eqref{Kororv} and \eqref{ineqnaDu}, one has
		\begin{equation}
			\begin{aligned}
				\| \nabla \Bu\|_{L^p(\mathcal{D}_R)} &\le C\left(\|\overline{\nabla}\Bu\|_{L^p(\mathcal{D}_R)}+ \left\|\frac{u^r}{r}\right\|_{L^p(\mathcal{D}_R)} +\left\|\frac{u^\theta}{r}\right\|_{L^p(\mathcal{D}_R)}      \right) \\
                &\le C\left(\|\overline{\D}(\Bu)\|_{L^p(\mathcal{D}_R)}+ \left\|\frac{u^r}{r}\right\|_{L^p(\mathcal{D}_R)} +\left\|\frac{u^\theta}{r}\right\|_{L^p(\mathcal{D}_R)}      \right) \\
                &\le C\left(\|\D(\Bu)\|_{L^p(\mathcal{D}_R)}+ \left\|\frac{u^r}{r}\right\|_{L^p(\mathcal{D}_R)} +\left\|\frac{u^\theta}{r}\right\|_{L^p(\mathcal{D}_R)} +\left\|\p_r u^{\theta}  \right\|_{L^p(\mathcal{D}_R)}   \right)\\
				 &\le C\left(\|\D(\Bu)\|_{L^p(\mathcal{D}_R)}+R^{-1}\|\Bu\|_{L^p(\mathcal{D}_R)}   \right) .
                \end{aligned}
		\end{equation}
		When  $R$ is sufficiently large, such that $CR^{-1}<1$, together with the Poincar\'e inequality and upon transforming the domain from $\mathcal{D}_R$ to $\OR$, one obtains
		\[
		\| \nabla \Bu\|_{L^p(\OR)} \le C \|\D(\Bu)\|_{L^p(\OR)} .
		\]

       Proof of $\text{Case}(2)$. Following the same verification as in $\text{Case}(1)$, one can confirm that Lemma \ref{Korn F} also holds for the domain $Q_1$ and thus Korn’s inequality \eqref{kk} is satisfied for $Q_1$. For general  vector field $\Bu$ in the domain $Q_R$, by employing the scaling transformation between $Q_1$ and $Q_R$, one can show that the constant $C$ in \eqref{ki2} is independent of $R$. Hence the proof of Lemma  \ref{korn} is completed. 
\end{proof}

	The Gagliardo-Nirenberg interpolation inequality in bounded domains is as follows.
	\begin{lemma}\label{G-Ninequality}(\hspace{1sp}\cite[Theorem 1.2]{LZ})
		Let $D$ be a bounded Lipschitz domain. Assume that $1 \le q,r \le + \infty$, $k,j \in \mathbb{N}$ with $j <k$, $\frac{j}{k}\le \theta\le 1$ and $p \in \mathbb{R}$ such that 
		\[
		\frac{1}{p}=\frac{j}{n}+\theta\left( \frac{1}{r}-\frac{k}{n}\right)+(1-\theta)\frac{1}{q}.
		\]
		Then there exists a constant C depending only on $n,k,q,r,\theta$ and $D$ such that for any $\Bu \in W^{k,r}(D)\cap L^q(D) $,
		\begin{equation}\label{GN}
			\begin{aligned}
				\|\nabla^j \Bu\|_{L^{p}(D)} \le C\|\nabla^k \Bu\|_{L^r(D)}^{\theta}\|\Bu\|_{L^q(D)}^{1-\theta}+C\|\Bu\|_{L^q(D)},
			\end{aligned}
		\end{equation}
		with the exception that if $1<r<+\infty $ and $k-j-\frac{n}{r} \in \mathbb{N}$, we must take $\frac{j}{k}\le \theta<1 $.
		
	\end{lemma}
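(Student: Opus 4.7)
The plan is to reduce the assertion on the bounded Lipschitz domain $D$ to the classical Gagliardo--Nirenberg inequality on the whole space $\mathbb{R}^n$ via an extension operator, and then convert the resulting global estimate back to $D$. Concretely, since $D$ is a bounded Lipschitz domain, I would invoke Stein's extension theorem to produce a bounded linear operator $E:W^{k,r}(D)\cap L^{q}(D)\to W^{k,r}(\mathbb{R}^{n})\cap L^{q}(\mathbb{R}^{n})$ satisfying $(E\Bu)|_{D}=\Bu$ and the simultaneous bounds
\begin{equation*}
\|E\Bu\|_{L^{q}(\mathbb{R}^{n})}\le C\|\Bu\|_{L^{q}(D)},\qquad \|E\Bu\|_{W^{k,r}(\mathbb{R}^{n})}\le C\|\Bu\|_{W^{k,r}(D)},
\end{equation*}
with $C=C(D,k,r,q)$.

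Next, I would apply the classical Gagliardo--Nirenberg inequality on $\mathbb{R}^{n}$ to $\widetilde{\Bu}:=E\Bu$. This whole-space inequality is standard and can be proved by a Littlewood--Paley decomposition, by Riesz potential/Hardy--Littlewood--Sobolev estimates, or by direct interpolation between $L^{q}$ and $\dot W^{k,r}$; the scaling condition on $(p,q,r,j,k,\theta)$ is forced by dimensional homogeneity. This yields
\begin{equation*}
\|\nabla^{j}\widetilde{\Bu}\|_{L^{p}(\mathbb{R}^{n})}\le C\|\nabla^{k}\widetilde{\Bu}\|_{L^{r}(\mathbb{R}^{n})}^{\theta}\|\widetilde{\Bu}\|_{L^{q}(\mathbb{R}^{n})}^{1-\theta},
\end{equation*}
where the condition $\tfrac{j}{k}\le\theta<1$ must be imposed precisely in the borderline case $1<r<+\infty$ with $k-j-\tfrac{n}{r}\in\mathbb{N}$, since otherwise the endpoint Sobolev embedding used in the proof fails. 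Restricting to $D$ and using $\widetilde{\Bu}|_{D}=\Bu$ together with the $L^{q}$ extension bound already controls the right-hand factor $\|\widetilde{\Bu}\|_{L^{q}(\mathbb{R}^{n})}^{1-\theta}$ by $C\|\Bu\|_{L^{q}(D)}^{1-\theta}$.

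The remaining step is to replace $\|\nabla^{k}\widetilde{\Bu}\|_{L^{r}(\mathbb{R}^{n})}$ by $\|\nabla^{k}\Bu\|_{L^{r}(D)}$. Stein's operator only provides the weaker bound $\|\nabla^{k}\widetilde{\Bu}\|_{L^{r}(\mathbb{R}^{n})}\le C\|\Bu\|_{W^{k,r}(D)}$, which contains all lower-order derivatives $\|\nabla^{i}\Bu\|_{L^{r}(D)}$ with $i<k$. I would control these lower-order terms by the interpolation estimate
\begin{equation*}
\|\nabla^{i}\Bu\|_{L^{r}(D)}\le \veps\,\|\nabla^{k}\Bu\|_{L^{r}(D)}+C(\veps)\|\Bu\|_{L^{q}(D)},\qquad 0<i<k,
\end{equation*}
itself a consequence of a lower-order Gagliardo--Nirenberg together with Young's inequality (i.e., an induction on $k$, supplemented by Ehrling's lemma). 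Inserting this into the previous line and absorbing the additive lower-order piece through the concavity inequality $a^{\theta}(b+c)^{\theta}\le a^{\theta}b^{\theta}+a^{\theta}c^{\theta}$ (combined with $\theta\le 1$) yields the stated estimate, with the extra additive term $C\|\Bu\|_{L^{q}(D)}$ arising precisely from the residual of the interpolation step.

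The main obstacle is this last extraction: a naive reduction produces only $\|\nabla^{j}\Bu\|_{L^{p}(D)}\le C\|\Bu\|_{W^{k,r}(D)}^{\theta}\|\Bu\|_{L^{q}(D)}^{1-\theta}$, whose right-hand side is not sharp in $k$. Obtaining exactly $\|\nabla^{k}\Bu\|_{L^{r}(D)}^{\theta}$ up to an additive $\|\Bu\|_{L^{q}(D)}$ requires the induction-plus-Ehrling bootstrap on the mixed lower-order Sobolev seminorms, and one must simultaneously verify that the scaling identity relating $(p,q,r,j,k,\theta,n)$ is consistent at each inductive stage. The exceptional case $k-j-\tfrac{n}{r}\in\mathbb{N}$ with $1<r<+\infty$ must be handled by tracking the endpoint failure through the whole-space inequality and excluding $\theta=1$.
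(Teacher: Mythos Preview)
The paper does not prove this lemma: it is quoted directly as \cite[Theorem 1.2]{LZ}, so there is no in-paper argument to compare against. Your extension-to-$\mathbb{R}^n$ strategy is a standard and essentially correct route to the bounded-domain version.

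The one genuinely delicate step, which you correctly isolate, is replacing $\|\Bu\|_{W^{k,r}(D)}$ (all that Stein's operator delivers) by $\|\nabla^{k}\Bu\|_{L^{r}(D)}+\|\Bu\|_{L^{q}(D)}$. The interpolation you invoke, $\|\nabla^{i}\Bu\|_{L^{r}(D)}\le\veps\,\|\nabla^{k}\Bu\|_{L^{r}(D)}+C(\veps)\|\Bu\|_{L^{q}(D)}$ for $0<i<k$, is itself an instance of the inequality being proved (with $(j,p)$ replaced by $(i,r)$), so the argument must be set up as a genuine induction rather than a single appeal to Ehrling. When $q\ge r$ this is immediate from H\"older on the bounded domain together with the standard Ehrling lemma in $W^{k,r}$; when $q<r$ one first needs the $j=0$ case of the inequality to control $\|\Bu\|_{L^{r}(D)}$ by $\|\nabla^{k}\Bu\|_{L^{r}(D)}$ and $\|\Bu\|_{L^{q}(D)}$, and the scaling relation does admit an admissible $\theta'$ for that step. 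Once this bookkeeping is in place, the subadditivity $(a+b)^{\theta}\le a^{\theta}+b^{\theta}$ for $\theta\in(0,1]$ closes the argument exactly as you describe.
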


	\begin{lemma}\label{SV}
		Let $\phi(t)$ be a nondecreasing function
		and $t_{0}>1$ be a fixed constant. Suppose that $\phi(t)$  is not identically zero.
		
		(a) For $2\leq p<3$, if $\phi(t)$ satisfies
		\begin{equation}\label{eqgrowth1}
			\phi(t)\leq  C_{1}	\phi^{\prime}(t)+C_{2}\left[\phi^{\prime}(t)\right]^{\frac{3}{p}} \quad \text{for  any} \ t\geq t_{0},
		\end{equation}
		then
		\begin{equation}\label{eqresult1}
			\varliminf_{t\rightarrow +\infty}t^{-\frac{3}{3-p}}\phi(t)>0.
		\end{equation}

		(b) For $2\leq p<3$, if $\phi(t)$ satisfies
		\begin{equation}\label{casebassump}
			\phi(t)\leq  C_{3}	\phi^{\prime}(t)+C_{4}t^{\frac{p-3}{p}}\left[\phi^{\prime}(t)\right]^{\frac{3}{p}} \quad \text{for  any} \ t\geq t_{0},
		\end{equation}
		then
		\begin{equation}\label{concluphit2}
			\varliminf_{t\rightarrow +\infty}t^{-\frac{6-p}{3-p}}\phi(t)>0.
		\end{equation}
		
		(c) For $2\leq p<3$, if $\phi(t)$ satisfies
		\begin{equation}\label{SVcasea2}
			\phi(t)\leq  C_5	t\phi^{\prime}(t)+C_6t^{\frac{3}{p}} \left[\phi^{\prime}(t) \right]^{\frac{3}{p}}\quad \text{for  any} \ t\geq t_{0},
		\end{equation}
		then
		\begin{equation}\label{SVc2}
			\varliminf_{t\rightarrow +\infty} (\ln t)^{-\frac{3}{3-p}}\phi(t)>0.
		\end{equation}
		
		(d) If $\phi(t)$ satisfies
		\begin{equation}\label{SVcased}
			\phi(t)\leq  C_7	\phi^{\prime}(t) \quad \text{for  any} \ t\geq t_{0},
		\end{equation}
		then 
		\begin{equation}\label{d2}
			\varliminf_{t\rightarrow +\infty}e^{-\frac{1}{C_7}t}\phi(t)>0.
		\end{equation}
		
		(e) If $\phi(t)$ satisfies
		\begin{equation}\label{SVcasea2cn}
			\phi(t)\leq  C_8 t	\phi^{\prime}(t) \quad \text{for  any} \ t\geq t_{0},
		\end{equation}
		then 
		\begin{equation}\label{}
			\varliminf_{t\rightarrow +\infty}t^{-\frac{1}{C_{8}}}\phi(t)>0.
		\end{equation}

	\end{lemma}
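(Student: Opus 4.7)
The plan is to reduce each of the five hypotheses to a first-order differential inequality involving $\phi$ and $\phi'$ alone, and then integrate it. As a preliminary step for all cases, I would use the monotonicity of $\phi$ together with nontriviality to fix $t_1\geq t_0$ with $\phi(t_1)>0$, so that $\phi(t)\geq\phi(t_1)>0$ for all $t\geq t_1$.

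Cases (d) and (e) can be dispatched directly. Dividing \eqref{SVcased} by $\phi>0$ gives $(\log\phi)'(t)\geq 1/C_7$, which integrates to $\phi(t)\geq\phi(t_1)e^{(t-t_1)/C_7}$ and yields \eqref{d2}. Likewise \eqref{SVcasea2cn} gives $(\log\phi)'(t)\geq 1/(C_8 t)$, integrating to $\phi(t)\geq \phi(t_1)(t/t_1)^{1/C_8}$.

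The nonlinear cases (a)--(c) require a dichotomy. From any inequality of the form $\phi\leq A+B$ one has $\phi\leq 2\max(A,B)$, so at each $t\geq t_1$ either the linear or the nonlinear branch dominates. For (a) these branches are $\phi'(t)\geq\phi(t)/(2C_1)$ or $\phi'(t)\geq(\phi(t)/(2C_2))^{p/3}$; for (b) the nonlinear branch becomes $\phi'(t)\geq(\phi(t)/(2C_4))^{p/3}\,t^{(3-p)/3}$; for (c) it becomes $\phi'(t)\geq(\phi(t)/(2C_6))^{p/3}/t$. The linear branch alone already produces exponential growth in $t$, which dwarfs any polynomial or logarithmic bound we seek. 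I would therefore first establish $\phi(t)\to\infty$---if $\phi$ were bounded then $\phi(t)\geq c>0$ combined with the inequality would force $\phi'(t)$ to stay uniformly away from zero, contradicting boundedness---and then argue that once $\phi$ exceeds a threshold depending only on the $C_i$ and $p$, the nonlinear branch must hold. Separating variables in $\phi^{-p/3}\phi'\geq c\,g(t)$ and integrating from some large $t_2$ to $t$ produces $\phi(t)^{(3-p)/3}\gtrsim\int_{t_2}^t g(s)\,ds$, giving $\phi(t)\gtrsim t^{3/(3-p)}$ in (a), $\phi(t)\gtrsim t^{(6-p)/(3-p)}$ in (b), and $\phi(t)\gtrsim(\log t)^{3/(3-p)}$ in (c).

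The main technical obstacle I anticipate is making the dichotomy rigorous when $\phi'(t)$ oscillates between the two branches, so that neither branch alone governs on a full interval. My fallback would be a scalar comparison argument: define $\psi$ by the autonomous ODE $\psi'(t)=\min(\psi(t)/(2C_1),(\psi(t)/(2C_2))^{p/3})$ (with its time-dependent analogues for (b)--(c)) and $\psi(t_1)=\phi(t_1)$, so that $\phi\geq\psi$ follows from the pointwise alternative; once $\psi$ passes the threshold the minimum is realised on the nonlinear branch, reducing the problem to integrating a single nonlinear ODE, whose lower bound then transfers to $\phi$.
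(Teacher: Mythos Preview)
Your treatment of (d) and (e) matches the paper exactly. For (a)--(c) your comparison--ODE fallback is correct and would close the argument, but it is more elaborate than what the paper does. The paper avoids the dichotomy entirely by the following trick: from $\phi(t)\geq\phi(t_1)>0$ and \eqref{eqgrowth1} one first extracts a uniform lower bound $\phi'(t)\geq\alpha_1>0$ for $t\geq t_1$; then, since $3/p>1$ (so $1-3/p<0$), one writes
\[
C_1\phi'(t)=C_1\bigl[\phi'(t)\bigr]^{1-\frac3p}\bigl[\phi'(t)\bigr]^{\frac3p}\leq C_1\alpha_1^{\frac{p-3}{p}}\bigl[\phi'(t)\bigr]^{\frac3p},
\]
which absorbs the linear term into the nonlinear one and reduces \eqref{eqgrowth1} to the single inequality $\phi\leq C[\phi']^{3/p}$, integrable by separation of variables. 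Cases (b) and (c) are handled the same way: for (b) one first invokes (a) (since $t^{(p-3)/p}\leq 1$) to get $\phi\gtrsim t^{3/(3-p)}$, turns this into a polynomial lower bound on $\phi'$, and again absorbs; for (c) one gets $t\phi'(t)\geq\alpha>0$ directly and absorbs $C_5t\phi'$ into $C_6 t^{3/p}[\phi']^{3/p}$.

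The advantage of the paper's route is that no pointwise alternative or ODE comparison is needed: the lower bound on $\phi'$ globally converts the two-term inequality into a one-term inequality. Your route has the merit of being more systematic (it would generalise to other combinations of exponents), but here it introduces machinery the problem does not require.
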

	\begin{proof} 
		{\bf Case (a)}\, Since $\phi(t)$ is not identically zero, we assume that there is a $t_1\geq t_0$ such that $\phi(t_1)> 0$. Due  to  \eqref{eqgrowth1}, it holds that 
		\begin{equation}
			C_{1}	\phi^{\prime}(t)+C_{2}\left[\phi^{\prime}(t)\right]^{\frac{3}{p}}\geq  \phi(t)\geq \phi(t_1)>0 \quad \text{for any}\ t\geq t_{1}.
		\end{equation}
		Hence there is a positive constant $\alpha_{1}$ such that 
		\begin{equation}\label{eqalpha1}
			\phi^{\prime}(t)>\alpha_{1} \quad \text{for any} \,\, t\geq t_{1}.
		\end{equation}
		Inserting \eqref{eqalpha1} into \eqref{eqgrowth1}, one has
		\begin{equation}
			\phi(t)\leq (C_{1}\alpha^{\frac{p-3}{p}}_{1}+C_{2})[\phi^{\prime}(t)]^{\frac{3}{p}}.
		\end{equation}
		Integrating both sides yields \eqref{eqresult1}.
		
		{\bf Case (b)}\, If $\phi(t)$ satisfies \eqref{casebassump}, it follows from $Case\,(a)$ that there is a $t_{2}>t_0$ and a constant $\alpha_{2}$ such  that
		\[
		C_{3}	\phi^{\prime}(t)+C_{4}t^{\frac{p-3}{p}}\left[\phi^{\prime}(t)\right]^{\frac{3}{p}}
		\geq \alpha_{2}t^{\frac{3}{3-p}} \quad \text{for any} \ t\geq t_{2}.
		\]
		Hence it holds that $C_{3}	\phi^{\prime}(t)\geq \frac{1}{2}\alpha_{2}t^{\frac{3}{3-p}}$ or
		$C_{4}t^{\frac{p-3}{p}}\left[\phi^{\prime}(t)\right]^{\frac{3}{p}}
		\geq \frac{1}{2}\alpha_{2}t^{\frac{3}{3-p}}$.
		Therefore, there exists a positive constant  $\alpha_{3}>0$ such that
		\[
		\phi^{\prime}(t)\geq \alpha_{3} t^{\frac{p^{2}-3p+9}{9-3p}} \quad \text{for any} \ t\geq t_{2}.
		\]
		Hence one derives
		\begin{equation}
			\phi(t)\leq C_{3}\alpha_{3}^{\frac{p-3}{p}} t^{\frac{p^{2}-3p+9}{9-3p}\frac{p-3}{p}}	\left[\phi^{\prime}(t)\right]^{\frac{3}{p}}+C_{4}t^{\frac{p-3}{p}}\left[\phi^{\prime}(t)\right]^{\frac{3}{p}}\leq Ct^{\frac{p-3}{p}}\left[\phi^{\prime}(t)\right]^{\frac{3}{p}},
		\end{equation}
		which implies \eqref{concluphit2}.

		{\bf Case (c)}\, Assume that $\phi(t_3)>0$, for some $t_3>t_0$,  then $\phi(t)$ satisfies 
       \begin{equation}
			 C_5	t\phi^{\prime}(t)+C_6t^{\frac{3}{p}} \left[\phi^{\prime}(t) \right]^{\frac{3}{p}}\ge \phi(t)\ge \phi(t_3)>0 \quad \text{for  any} \ t \geq t_{3}.
		\end{equation}
Hence it holds that $ C_5	t\phi^{\prime}(t)\ge \frac{1}{2}\phi(t_3)$ or $C_6t^{\frac{3}{p}} \left[\phi^{\prime}(t) \right]^{\frac{3}{p}}\ge \frac{1}{2}\phi(t_3)$, then there exists a positive constant $\alpha_{4}$ such that 
	\begin{equation}\label{eqalpha1n}
			t\phi^{\prime}(t)>\alpha_{4} \quad \text{for any} \,\, t\geq t_{3}.
		\end{equation}
Hence one obtains
\begin{equation}
  \phi(t)\le \left( C_5 \alpha_4^{\frac{p-3}{p}}+C_6\right)t^{\frac{3}{p}}[\phi'(t)]^{\frac{3}{p}},
\end{equation}
which implies \eqref{SVc2}.

		{\bf Case(d)}--{\bf Case(e)} can be proved in almost the same way as above, for which the details are omitted.
	\end{proof}

	\section{Liouville-type Theorem for Axisymmetric Solutions}\label{Sec3}
	In this section, we deal with the axisymmetric solution of the non-Newtonian fluid equations \eqref{eqsteadynNfs} in a slab with no-slip boundary conditions \eqref{noslipboun}. Before embarking on the proof of Theorem \ref{th:01}, we first prove a preparatory proposition.
	\begin{pro}\label{IG1}
		 Let $2\leq p<+\infty$. Assume that $\Bu$ be an axisymmetric weak solution of the equations \eqref{eqsteadynNfs} in a slab $\Omega=\mathbb{R}^{2}\times (0,1)$ with no-slip boundary conditions \eqref{noslipboun}. Let
		\begin{equation}\label{D-integral}
			\mathcal{E}(R)
			=\int_{0}^{1}\int_{\left\{x^{2}_{1}+x^{2}_{2}<R^{2}\right\}}(|\nabla\Bu|^{2}+|\D(\Bu)|^p)\,\dd x_{1}\dd x_{2}\dd x_{3}.
		\end{equation}
        
		$(b_1)$ For $2\leq p<3$, if $\Bu$ satisfies
		\begin{equation}\label{pes1}
			\varliminf_{R\rightarrow +\infty}R^{-\frac{6-p}{3-p}}\mathcal{E}(R)=0,
		\end{equation}
		then $\Bu \equiv 0$.
        
		$(b_2)$  For $p\ge 3$, there exists a constant $C_{\ast}(p) > 0$, such that if $\Bu$ satisfies 
		\begin{equation}\label{pes2}
			\varliminf_{R\rightarrow +\infty}e^{-\frac{R}{C_{\ast}(p)} }\mathcal{E}(R)=0,
		\end{equation}
        then $\Bu \equiv 0$.

	\end{pro}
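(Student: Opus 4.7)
I argue by contradiction: assuming $\Bu\not\equiv 0$, I derive a Saint-Venant growth estimate for $\mathcal{E}(R)$ that contradicts either \eqref{pes1} or \eqref{pes2}. The strategy adapts \cite{aBGWX} from the Navier-Stokes case $p=2$ to the nonlinear diffusion $\BA_p(\Bu)$, with Lemma \ref{korn} (uniform-in-$R$ Korn inequality on $\OR$) as the new technical ingredient.

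\emph{Step 1 (energy identity via Bogovskii corrector).} Fix a smooth radial cutoff $\zeta_R=\zeta_R(r)$ with $\zeta_R\equiv 1$ for $r\leq R-1$, $\zeta_R\equiv 0$ for $r\geq R$, and $|\zeta_R'|\leq C$. Since $\Bu\zeta_R$ is not divergence-free, invoke Lemma \ref{Bogovskii} to produce an axisymmetric corrector $\bBV_R$ supported in $\OR$ with $\na\cdot\bBV_R=\Bu\cdot\na\zeta_R$ and $\|\na\bBV_R\|_{L^q(\OR)}\leq C\|\Bu\|_{L^q(\OR)}$ for $q\in\{2,p\}$. Inserting $\Bvphi=\Bu\zeta_R-\bBV_R$ into the weak formulation yields
\[
\int_\Omega\zeta_R\,\BA_p(\Bu){:}\D(\Bu)\,\dd\Bx = J_1+J_2+J_3+J_4,
\]
with $J_1=-\int\BA_p(\Bu){:}(\na\zeta_R\otimes\Bu)_{\rm sym}\,\dd\Bx$, $J_2=\int\BA_p(\Bu){:}\D(\bBV_R)\,\dd\Bx$, $J_3=\tfrac12\int|\Bu|^2\Bu\cdot\na\zeta_R\,\dd\Bx$, $J_4=-\int(\Bu\cdot\na)\Bu\cdot\bBV_R\,\dd\Bx$, all supported on $\OR$.

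\emph{Step 2 (coercivity and nonlinear estimates).} For $p\geq 2$, $\BA_p(\Bu){:}\D(\Bu)\geq|\D(\Bu)|^p$, so the LHS bounds $\int_{B_{R-1}\times(0,1)}|\D(\Bu)|^p$; combined with Lemma \ref{korn} Case (1) applied on the annulus of inner radius $R-1$ (recovering $\|\na\Bu\|_{L^2}$ from $\|\D(\Bu)\|_{L^2}$ with an $R$-independent constant), the LHS in fact dominates $\mathcal{E}(R-1)$. For $J_1,J_2$, using $|\BA_p(\Bu)|\leq C(1+|\D(\Bu)|^{p-1})$, Hölder, the Poincaré inequality in $z$ (since $\Bu$ and $\bBV_R$ vanish at $z=0,1$), and the Bogovskii bound, one gets $|J_1|+|J_2|\leq C\Psi(R)$ with $\Psi(R):=\int_{\OR}(|\na\Bu|^2+|\D(\Bu)|^p)\,\dd\Bx$. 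For $J_3,J_4$ the crucial input is
\[
\|\Bu\|_{L^3(\OR)}\leq C R^{\gamma_p}\Psi(R)^{\delta_p},
\]
obtained from Lemma \ref{G-Ninequality} (Gagliardo-Nirenberg interpolating $L^3$ between $L^p$ and $L^2$), Poincaré, and Korn; the factor $R^{\gamma_p}$ encodes the azimuthal length $|\OR|\sim R$ picked up through Hölder. After Young's inequality one arrives at
\[
|J_3|+|J_4|\leq C\Psi(R)+CR^{(p-3)/p}\Psi(R)^{3/p}\quad(2\leq p<3),\qquad |J_3|+|J_4|\leq C\Psi(R)\quad(p\geq 3),
\]
where the absence of an $R$ factor in the second range reflects that for $p\geq 3$ the Sobolev embedding on the thin-slab annulus absorbs the convective term directly.

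\emph{Step 3 (Saint-Venant and conclusion).} Gathering the bounds and passing from the unit-increment cutoff to a continuous family (so that $\Psi(R)$ is comparable to $\mathcal{E}'(R)$) yields
\[
\mathcal{E}(R)\leq C\mathcal{E}'(R)+CR^{(p-3)/p}[\mathcal{E}'(R)]^{3/p}\quad(2\leq p<3),\qquad \mathcal{E}(R)\leq C\mathcal{E}'(R)\quad(p\geq 3),
\]
which are exactly the hypotheses of Lemma \ref{SV} Cases (b) and (d). Thus either $\mathcal{E}\equiv 0$, in which case $\Bu\equiv 0$, or else $\liminf_{R\to\infty}R^{-(6-p)/(3-p)}\mathcal{E}(R)>0$ (resp.\ $\liminf_{R\to\infty}e^{-R/C_\ast(p)}\mathcal{E}(R)>0$), contradicting \eqref{pes1} (resp.\ \eqref{pes2}). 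The main obstacle is Step 2: the Gagliardo-Nirenberg exponents have to be chosen so that the $R$-power $\gamma_p$ delivers exactly the $R^{(p-3)/p}$ prefactor required by Case (b) of Lemma \ref{SV}, with $p=3$ being precisely the threshold at which this prefactor vanishes and the problem transitions from algebraic to exponential growth. Lemma \ref{korn} plays an indispensable role throughout, since the natural coercivity of $\BA_p(\Bu){:}\D(\Bu)$ only controls $\|\D(\Bu)\|_{L^p}$, and without a Korn inequality whose constant is independent of $R$ one could not recover $\|\na\Bu\|_{L^2}$ on $\OR$ uniformly as $R\to\infty$.
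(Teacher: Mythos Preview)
Your overall architecture matches the paper's: localized energy identity on $\OR$, Bogovskii to handle the divergence defect (the paper tests with $\Bu\varphi_R$ and treats the pressure term separately, you test with the divergence-free combination $\Bu\zeta_R-\bBV_R$; these are equivalent), Poincar\'e and Korn on $\OR$ to close the estimates, and then Lemma~\ref{SV}. Two points deserve comment.

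\emph{Recovering $\|\na\Bu\|_{L^2}^2$ on the left.} Your sentence ``Lemma~\ref{korn} Case~(1) applied on the annulus of inner radius $R-1$ \dots\ the LHS in fact dominates $\mathcal{E}(R-1)$'' is imprecise: Lemma~\ref{korn} is stated on $\OR$, not on the full cylinder $B_{R-1}\times(0,1)$, so it does not directly give $\int_{B_{R-1}\times(0,1)}|\na\Bu|^2\leq C\int_{B_{R-1}\times(0,1)}|\D(\Bu)|^2$. The paper avoids this issue entirely by an exact identity for divergence-free fields,
\[
\int_\Omega|\na\Bu|^2\varphi_R+\int_\Omega\na\varphi_R\cdot\na\Bu\cdot\Bu
=2\int_\Omega|\D(\Bu)|^2\varphi_R+2\int_\Omega\D(\Bu){:}(\Bu\otimes\na\varphi_R),
\]
which replaces Korn-on-the-cylinder by two additional boundary terms on $\OR$, each handled like $J_1$. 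Your approach can be rescued (sum Lemma~\ref{korn} over unit annuli, absorbing the finite core), but as written the step is a gap.

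\emph{Case $p\geq 3$.} Here you take a genuinely shorter route than the paper. The paper first derives \emph{two} differential inequalities, $Y(R)\leq C_3Y'(R)+C_4R^{1-3/p}[Y'(R)]^{3/p}$ and $Y(R)\leq C_5Y'(R)+C_6R^{-1/2}[Y'(R)]^{3/2}$, then bootstraps (using the second with Lemma~\ref{SV}(b) at $p=2$ to force $Y'(R)\gtrsim R^{8/3}$, and feeding this back into the first) to finally reach $Y(R)\leq C_8Y'(R)$. Your claim that $|J_3|+|J_4|\leq C\Psi(R)$ directly, by interpolating $\|\Bu\|_{L^3(\OR)}$ between $\|\Bu\|_{L^2(\OR)}$ and $\|\Bu\|_{L^p(\OR)}$ and then applying Poincar\'e, Korn and Young, is in fact valid for $p\geq 3$ (and is exactly the mechanism the paper uses later, in the general three-dimensional setting of Proposition~\ref{IG2}, equations \eqref{3Da2 non linear term}--\eqref{3Da2esp}). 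To make this work cleanly you should also invoke the Bogovskii bound in $L^3$ (not just $q\in\{2,p\}$), so that $|J_4|\leq\|\Bu\|_{L^3(\OR)}^2\|\na\bBV_R\|_{L^3(\OR)}\leq C\|\Bu\|_{L^3(\OR)}^3$; this is covered by the $R$-independent estimate \eqref{esbm}, which holds for every exponent.
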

	\begin{proof}
		First, one can define a cut-off function
		\be \label{cut-off}
		\varphi_R(r) = \left\{ \ba
		&1,\ \ \ \ \ \ \ \ \ \ r < R-1, \\
		&R-r,\ \ \ \ R-1 \leq r \leq R, \\
		&0, \ \ \ \ \  \ \ \ \ \ r > R.
		\ea  \right.
		\ee
		Multiplying the momentum equation  in \eqref{eqsteadynNfs}
		with $\Bu\varphi_{R}(r)$ and integrating by parts over the domain $\Omega$ yields
		\begin{equation}\label{EQF}
			\begin{split}
				&\int_{\Omega}(1+|\D(\Bu)|^2)^{\frac{p-2}{2}}|\D(\Bu)|^2\varphi_{R}\,\dd\Bx\\
				=&
				-\int_{\Omega}(1+|\D(\Bu)|^2)^{\frac{p-2}{2}}\D(\Bu):(\Bu\otimes\nabla\varphi_{R})\,\dd\Bx+\int_{\Omega}\frac{1}{2}|\Bu|^{2}\Bu\cdot
				\nabla\varphi_{R}\,\dd\Bx
				+\int_{\Omega}P\Bu\cdot\nabla\varphi_{R}\,\dd\Bx.
			\end{split}
		\end{equation}

    $Case (b_1)$. $2\leq p<3$. There exists some $C>0$, such that 
    \begin{equation}
        \int_{\Omega}(1+|\D(\Bu)|^2)^{\frac{p-2}{2}}|\D(\Bu)|^2\varphi_{R}\,\dd\Bx \ge C \int_{\Omega}(|\D(\Bu)|^2+|\D(\Bu)|^p)\varphi_{R}\,\dd\Bx.
    \end{equation}
    Moreover, one has 
\begin{equation}\label{Korneql}
    \begin{aligned}
        & \int_{\Omega}|\nabla \Bu|^2 \varphi_R\,\dd\Bx+\int_{\Omega} \nabla\varphi_R \cdot\nabla\Bu \cdot \Bu\,\dd\Bx \\
        =&\int_{\Omega} -\Delta \Bu \cdot \Bu \varphi_R\,\dd \Bx =  \int_{\Omega} -2\text{div} \D(\Bu) \cdot \Bu \varphi_R\,\dd \Bx            \\
        =&\int_{\Omega} 2|\D( \Bu)|^2 \varphi_R \,\dd\Bx+\int_{\Omega}2 \D(\Bu):(\Bu \otimes \nabla \varphi_R)\,\dd\Bx.
    \end{aligned}
\end{equation}
From \eqref{EQF}-\eqref{Korneql}, one obtains
		\begin{equation}\label{Es}
			\begin{split}
				C\int_{\Omega}(|\nabla\Bu|^{2}+|\D(\Bu)|^p)\varphi_{R}\,\dd\Bx &\le  \int_{\Omega}(1+|\D(\Bu)|^2)^{\frac{p-2}{2}}|\D(\Bu)|^2\varphi_{R}\,\dd\Bx\\
               &\quad +\left|\int_{\Omega}\D(\Bu):(\Bu \otimes \nabla \varphi_R)\,\dd\Bx\right|+\left|\int_{\Omega} \nabla\varphi_R \cdot \nabla\Bu \cdot \Bu\,\dd\Bx \right|\\
				&\leq \left|\int_{\Omega}(1+|\D(\Bu)|^2)^{\frac{p-2}{2}}\D(\Bu):(\Bu\otimes\nabla\varphi_{R})\,\dd\Bx\right|\\
                &\quad +\left|\int_{\Omega}\frac{1}{2}|\Bu|^{2}\Bu\cdot
				\nabla\varphi_{R}\,\dd\Bx\right|
                +\left|\int_{\Omega}P\Bu\cdot\nabla\varphi_{R}\,\dd\Bx\right|\\
                &\quad +\left|  \int_{\Omega}\D(\Bu):(\Bu \otimes \nabla \varphi_R)\,\dd\Bx\right|+\left|\int_{\Omega} \nabla\varphi_R \cdot \nabla\Bu \cdot \Bu\,\dd\Bx  \right|.
			\end{split}
		\end{equation} 
 By H{\"o}lder's inequality, Korn's inequality \eqref{yski} and Poincar\'e  inequality, one obtains
		\begin{equation}\label{ess1}
			\begin{aligned}
				& \left|\int_{\Omega}(1+|\D(\Bu)|^2)^{\frac{p-2}{2}}\D(\Bu):(\Bu\otimes\nabla\varphi_{R})\,\dd\Bx\right|\\
				\leq &\,C
				\left|\int_{\OR}(|\D(\Bu)|+|\D(\Bu)|^{p-1}):|(\Bu\otimes\nabla\varphi_{R})|\,\dd\Bx\right|\\
 \le &\,C \left(\|\D(\Bu)\|_{L^2(\OR)}\cdot \|\Bu\|_{L^2(\OR)}+\|\D(\Bu)\|_{L^p(\OR)}^{p-1}\cdot \|\Bu\|_{L^p(\OR)} \right)\\
 \le & \,C\left(\| \nabla \Bu\|_{L^2(\OR)}^2+ \|\D(\Bu)\|_{L^p(\OR)}^{p-1} \|\nabla \Bu\|_{L^p(\OR)}\right)\\
 \le & \,C\left(\| \nabla \Bu\|_{L^2(\OR)}^2+ \|\D(\Bu)\|_{L^p(\OR)}^p\right)
	\end{aligned}
		\end{equation}
and 
        \begin{equation}\label{fj}
            \begin{aligned}
            &\left|  \int_{\Omega}\D(\Bu):(\Bu \otimes \nabla \varphi_R)\,\dd\Bx\right|+\left|\int_{\Omega}\nabla\varphi_R \cdot  \nabla\Bu \cdot \Bu\,\dd\Bx  \right|\\
        \le &\,C\left( \|\D(\Bu)\|_{L^2(\OR)}+\|\nabla \Bu\|_{L^2(\OR)}\right)\cdot \|\Bu\|_{L^2(\OR)}
            \le C\|\nabla \Bu\|_{L^2(\OR)}^2.
            \end{aligned}
        \end{equation}
By Gagliardo-Nirenberg interpolation inequality \eqref{GN}, Korn's inequality \eqref{yski} and Poincar\'e inequality, one has
		\begin{equation}\label{ess2}
			\begin{split}
				&\left|\int_{\Omega}\frac{1}{2}|\Bu|^{2}\Bu\cdot
				\nabla\varphi_{R}\,\dd\Bx\right|\\
                 \le &\, CR \left|\int_{0}^{1}\int_{R-1}^{R}|\Bu|^{3}\,\dd r\dd z\right|
				\le CR\left(\|\Bu\|_{L^p(D_R)}^{5-\frac{6}{p}}\|(\p_r,\p_z) \Bu\|_{L^p(D_R)}^{\frac{6}{p}-2}+\|\Bu\|^3_{L^p (D_R)}\right)\\
				 \le& \,CR^{1-\frac{3}{p}} \left(\|\Bu\|_{L^p(\OR)}^{5- \frac{6}{p}} \|\nabla \Bu\|_{L^p(\OR)}^{\frac{6}{p}-2}+ \|\nabla \Bu\|_{L^p(\OR)}^3 \right)
				\le CR^{1-\frac{3}{p}} \| \D(\Bu)\|_{L^p(\OR)}^3.
			\end{split}
		\end{equation}

		It remains to estimate 
		\begin{equation}\label{espress}
			\left|\int_{\Omega}P\Bu\cdot\nabla\varphi_{R}\,\dd\Bx\right|.
		\end{equation}
	The divergence free condition for the axisymmetric solution reduces to 
		\[
		\partial_r(ru^{r})+\partial_z(ru^{z})=0,
		\]
	which combining with the no-slip boundary conditions \eqref{noslipboun} gives 
		\[
		\partial_r \int_{0}^{1}ru^{r}\,\dd z=-\int_{0}^{1}\partial_z(ru^{z})\,\dd z=0.
		\]
	 Consequently,  it holds that
		\begin{equation}
			\int_{0}^{1}ru^{r}\,\dd z=0 \quad  \text{and} \quad
			\int_{0}^{1}\int_{R-1}^{R}ru^{r}\,\dd r\dd z=0.
		\end{equation}
		By Lemma \ref{Bogovskii}, there is a vector-valued function $\widetilde{\BPsi}_{R}(r, z) \in W^{1, p}_{0}(D_R; \mathbb{R}^2)$, such that 
		\begin{equation}\label{translate}
		\partial_r 	\widetilde{\Psi}_{R}^r  + \partial_z	\widetilde{\Psi}_{R}^z=r u^r,\ \ \mbox{in}\ D_R 
		\end{equation}
		and
			\begin{equation}\label{syB}
			\| (\p_r,\p_z )\widetilde{\BPsi}_{R}\|_{L^p(D_R)}\le C\|ru^r \|_{L^p(D_R)} \le CR^{1-\frac{1}{p}}\|u^r\|_{L^p(\OR)}. 
	\end{equation}
	One notes that the constant $C$ in \eqref{syB} is independent of $R$. 
	Let 
		\begin{equation}\label{translate1}
		\Psi^r_R = \frac{\widetilde{\Psi}_{R}^r}{r}, \quad \quad \Psi^z_R=\frac{ \widetilde{\Psi}_{R}^z}{r}, \ \ \ \BPsi_R= \Psi_R^r \Be_r + \Psi_R^z \Be_z. 
	\end{equation}
	Now	\eqref{translate} 	can be written as 	
	\begin{equation}\label{divergence}
	{\rm div}\BPsi_R = 	\frac{1}{r} \left[\partial_r (r \Psi^r_R)+ \partial_z (r\Psi_R^z) \right] = u^r, \,\,\,\,\text{in} \,\,\,\OR. 
		\end{equation}
	Moreover, by Poincar\'e inequality, \eqref{ineqnaDu} and \eqref{syB},  it holds that 
	\begin{equation}\begin{aligned}		\label{esbm}	
		\|\nabla \BPsi_R\|_{L^p(\OR)} & \leq \|\partial_r(\Psi_R^r, \Psi_R^z)\|_{L^p(\OR)} + \|\partial_z(\Psi_R^r, \Psi_R^z)\|_{L^p(\OR)} + \left\|\frac{\Psi_R^r}{r}\right\|_{L^p(\OR)} \\
		& \leq C R^{\frac1p}\|\partial_r (\Psi_R^r, \Psi_R^z)\|_{L^p(D_R)} +C R^{\frac1p}\|\partial_z (\Psi_R^r, \Psi_R^z)\|_{L^p(D_R)} \\
		&\leq CR^{\frac1p-1}\|\partial_r (\widetilde{\Psi}_R^r, \widetilde{\Psi}_R^z)\|_{L^p(D_R)} +  CR^{\frac1p-1}\|\partial_z (\widetilde{\Psi}_R^r, \widetilde{\Psi}_R^z)\|_{L^p(D_R)}\\
		&\leq CR^{\frac1p-1}\|ru^r\|_{L^p(D_R)} \leq C\|u^r\|_{L^p(\OR)}. 
\end{aligned}	\end{equation}	
	Therefore, combining \eqref{esbm}, Poincar\'e inequality, Korn's type inequality \eqref{yski} and H{\"o}lder's inequality, one obtains 
				
		\begin{equation}\label{esp}
			\begin{split}
				\left|\int_{\Omega}P\Bu\cdot\nabla\varphi_{R}\,\dd\Bx\right| &= \left|  \int_{\OR} P u^r\,\dd\Bx \right|= \left|  \int_{\OR} P \text{div} \BPsi_{R}\,\dd\Bx \right|= \left|  \int_{\OR} -\nabla P \cdot \BPsi_{R}\,\dd\Bx \right| \\
				&= \left|  \int_{\OR}     [-\text{div} A_p(\Bu)+(\Bu \cdot \nabla )\Bu ]   \cdot \BPsi_{R}        \,\dd\Bx \right|\\
				&= 	 \left|\int_{\OR}(1+|\D(\Bu)|^2)^{\frac{p-2}{2}}\D(\Bu):\D\BPsi_{R} \,\dd\Bx-\int_{\OR}(\Bu \cdot \nabla )\BPsi_{R} \cdot \Bu\,\dd\Bx \right| \\
				&\le C \left(\|\D(\Bu)\|_{L^2(\OR)}\cdot \|\nabla \BPsi_{R}\|_{L^2(\OR)}+\|\D(\Bu)\|_{L^p(\OR)}^{p-1}\cdot \|\nabla \BPsi_{R}\|_{L^p(\OR)} \right)\\   
				&\quad + C\|\nabla \BPsi_R \|_{L^p(\OR)}\cdot \|\Bu\|^2_{L^{\frac{2p}{p-1}}(\OR)}\\
                &\le  C\left(\|\nabla \Bu\|_{L^2(\OR)}^2+\|\D(\Bu)\|_{L^p(\OR)}^p  \right) + C\|u^r\|_{L^p(\OR)}\cdot \|\Bu\|^2_{L^{\frac{2p}{p-1}}(\OR)}\\
				&\le  C\left(\|\nabla \Bu\|_{L^2(\OR)}^2+\|\D(\Bu)\|_{L^p(\OR)}^p  \right)+ CR^{1-\frac{3}{p}}\| \D(\Bu)\|_{L^p(\OR)}^3,\\
			\end{split}
		\end{equation}
		here the last inequality is due to 
		\begin{equation}\label{PL}
			\begin{aligned}
				\|\Bu\|_{L^q(\OR)}^2 & =\left(2\pi \int_0^1\int_{R-1}^R|\Bu|^q  r\,\dd r\dd z \right)^{\frac{2}{q}} \le CR^{\frac{2}{q}}\|\Bu\|_{L^q(D_R)}^2
			 	\le CR^{\frac{2}{q}}\|(\p_r,\p_z)\Bu\|_{L^{\frac{2q}{q+2}}(D_R)}^2  \\
				& \le CR^{\frac{2}{q}} \|\nabla \Bu \|_{L^p(D_R)}^2  \le   C R^{1-\frac{3}{p}}\|\nabla\Bu\|_{L^p(\OR)}^2 \\
					& \le C R^{1-\frac{3}{p}}\|\D(\Bu)\|_{L^p(\OR)}^2,
			\end{aligned}    
		\end{equation}
		where $q=\frac{2p}{p-1}$. 
		Combining \eqref{Es}--\eqref{ess2} and  \eqref{esp}, one arrives at
		\begin{equation}\label{eqfinaq1}
			\int_{\Omega}(|\nabla\Bu|^{2}+|\D(\Bu)|^p)\varphi_{R}\,\dd\Bx \le  C \left(\|\nabla \Bu\|_{L^2(\OR)}^2+\|\D(\Bu)\|_{L^p(\OR)}^p  \right)+ CR^{1-\frac{3}{p}}\| \D(\Bu)\|_{L^p(\OR)}^3.
		\end{equation}

		Let 
		\begin{equation}\label{YR}
			Y(R)=
			\int_{0}^{1}\iint_{\mathbb{R}^{2}}
			(|\nabla\Bu|^{2}+|\D(\Bu)|^p)\varphi_{R}\left(\sqrt{x_{1}^{2}+x_{2}^{2}}\right)
			\, \dd x_{1} \dd x_{2} \dd x_{3}.
		\end{equation}
	It can be verified that 
		\[
		Y^{\prime}(R)=\int_{\OR}|\nabla\Bu|^{2}+|\D(\Bu)|^p \,\dd\Bx.
		\]
		Hence the estimate \eqref{eqfinaq1} can be written as 
		\begin{equation}\label{Casea1es}
			Y(R)\le C_1 Y'(R)+C_2 R^{1-\frac{3}{p}}[Y'(R)]^{\frac{3}{p}}.  
		\end{equation}
		Then $Case(b_1)$ of Proposition \ref{IG1} follows from Lemma \ref{SV}.
		
		$Case(b_2)$. $p\ge 3$. The main idea of the proof is the same as that for $Case(b_1)$. The inequality \eqref{Es} and the estimate \eqref{ess1}-\eqref{fj} still hold. By Poincar\'e inequality and Korn's inequality \eqref{yski}, one has 
		\begin{equation}\label{estimate1-1}
			\begin{split}
				\left|\int_{\Omega}\frac{1}{2}|\Bu|^{2}\Bu\cdot
				\nabla\varphi_{R}\,\dd\Bx\right| & \le CR \left|\int_{0}^{1}\int_{R-1}^{R}|\Bu|^{3}\,\dd r\dd z\right|
				\le CR\left(\int_{0}^{1}\int_{R-1}^{R}|\Bu|^{p}\,\dd r\dd z\right)^{\frac{3}{p}}\\
				& \le CR^{1- \frac3p}\|\nabla \Bu\|_{L^p(\OR)}^3 \le CR^{1-\frac{3}{p}} \| \D(\Bu)\|_{L^p(\OR)}^3.
			\end{split}
		\end{equation}
		Or one has another estimate 
		\begin{equation}\label{estimate1-2}
			\begin{split}
				\left|\int_{\Omega}\frac{1}{2}|\Bu|^{2}\Bu\cdot
				\nabla\varphi_{R}\,\dd\Bx\right|\le CR \left|\int_{0}^{1}\int_{R-1}^{R}|\Bu|^{3}\,\dd r\dd z\right|&\le CR \left(\int_{0}^{1}\int_{R-1}^{R}|(\partial_{r},\partial_{z})\Bu|^{2}\,\dd r\dd z\right)^{\frac{3}{2}}\\
				&\le CR^{-\frac{1}{2}}\|\nabla \Bu\|_{L^2(\OR)}^3.
			\end{split}
		\end{equation}
		The estimate for the pressure term is analogous to that for \eqref{esp}. Let $ \BPsi_R$ be the functions defined by \eqref{translate1}. By Poincar\'e inequality and Korn's inequality \eqref{yski}, one obtains
		\begin{equation}\label{estimate2-1}
			\begin{split}
				\left|\int_{\Omega}P\Bu\cdot\nabla\varphi_{R}\,\dd\Bx\right|
				&= 	 \left|\int_{\OR}(1+|\D(\Bu)|^2)^{\frac{p-2}{2}}\D(\Bu):\D\BPsi_{R} \,\dd\Bx-\int_{\OR}(\Bu \cdot \nabla )\BPsi_{R} \cdot \Bu\,\dd\Bx \right| \\
				&\le C \left(\|\D(\Bu)\|_{L^2(\OR)}\cdot \|\nabla \BPsi_{R}\|_{L^2(\OR)}+\|\D(\Bu)\|_{L^p(\OR)}^{p-1}\cdot \|\nabla \BPsi_{R}\|_{L^p(\OR)} \right)\\   
				&\quad + C\|\nabla \BPsi_{R} \|_{L^p(\OR)}\cdot \|\Bu\|^2_{L^{\frac{2p}{p-1}}(\OR)}\\
				& \le C \left( \|\D(\Bu)\|_{L^2(\OR)} \|u^r\|_{L^2(\OR)} +  \|\D(\Bu)\|_{L^p(\OR)}^{p-1} \|u^r\|_{L^p(\OR)} \right) \\ 
				& \quad + C\|\nabla \BPsi_{R} \|_{L^p(\OR)}\cdot \|\Bu\|^2_{L^{\frac{2p}{p-1}}(\OR)} \\
				&\le  C\left(\|\nabla \Bu\|_{L^2(\OR)}^2+\|\D(\Bu)\|_{L^p(\OR)}^p  \right)+ CR^{1-\frac{3}{p}}\| \D(\Bu)\|_{L^p(\OR)}^3,\\
			\end{split}
		\end{equation}
		where the last inequality is due to 
		\begin{equation}\label{PL1}
			\begin{aligned}
				\|\Bu\|_{L^{\frac{2p}{p-1}}(\OR)}^2&=\left(2\pi \int_0^1\int_{R-1}^R|\Bu|^{\frac{2p}{p-1}} r\,drdz \right)^{\frac{p-1}{p}}\\
                &\le CR^{1-\frac{1}{p}}\|\Bu\|_{L^{\frac{2p}{p-1}}(D_R)}^2
				\le CR^{1-\frac{1}{p}}\|\Bu\|_{L^p(D_R)}^2\\
				& \le CR^{1-\frac{3}{p}}\|\Bu\|_{L^p(\OR)}^2 \le CR^{1-\frac{3}{p}}\|\nabla \Bu\|_{L^p(\OR)}^2
				\\
                &\le C R^{1-\frac{3}{p}}\|\D(\Bu)\|_{L^p(\OR)}^2.
			\end{aligned}    
		\end{equation}
		Or
		\begin{equation}\label{estimate2-2}
			\begin{split}
				\left|\int_{\Omega}P\Bu\cdot\nabla\varphi_{R}\,\dd\Bx\right| 
                \le 
                C\left(\|\nabla \Bu\|_{L^2(\OR)}^2+\|\D(\Bu)\|_{L^p(\OR)}^p  \right)+ CR^{-\frac{1}{2}}\| \nabla\Bu\|_{L^2(\OR)}^3.
			\end{split}
		\end{equation}
Combining the estimates \eqref{Es}, \eqref{ess1}, \eqref{estimate1-1} and \eqref{estimate2-1}, one arrives at
		\begin{equation}\label{Casea2es}
			Y(R)\le C_3 Y'(R)+C_4 R^{1-\frac{3}{p}}[Y'(R)]^{\frac{3}{p}}.
		\end{equation}
		Or, combining the estimates \eqref{Es}, \eqref{ess1}, \eqref{estimate1-2} and \eqref{estimate2-2}, one obtains
		\begin{equation}\label{Casea22es}
			Y(R)\le C_5 Y'(R)+C_6 R^{-\frac{1}{2}}[Y'(R)]^{\frac{3}{2}}.  
		\end{equation}

		According to the estimate \eqref{Casea22es} and Case $(b)$ of Lemma \ref{SV} with $p=2$, if $Y(R)$ is not identically zero, there exist $C_0$ and $R_0$ such that 
		\[
		Y(R) \ge C_0 R^4,      \quad  \text{for any} \, \, R\ge R_0. 
		\]
		It implies that 
		\begin{equation}\label{R4}
		 C_5 Y'(R)+C_6 R^{-\frac{1}{2}}[Y'(R)]^{\frac{3}{2}}  \ge C_0 R^4, \quad \text{for any} \, \, R\ge R_0.
		\end{equation}
	Hence, there exists some constant $C_7$ such that
		\begin{equation}\label{R4_1}
			Y'(R) \ge C_7 R^{\frac{8}{3}}, \quad \text{for any} \, \, R\geq R_0. 
		\end{equation}
	Then taking \eqref{R4_1} into  \eqref{Casea2es}, one arrives at
		\begin{equation}
			Y(R)\le C_3 Y'(R)+C_4 R^{1-\frac{3}{p}}[Y'(R)]^{\frac{3}{p}} \le C_3 Y'(R)+ C_4 R^{1-\frac{3}{p}}R^{\frac{8}{3}\cdot \frac{3-p}{p}} Y'(R).  
		\end{equation}
		Since $p \ge 3$ and $1-\frac{3}{p}+\frac{8(3-p)}{3p}\le 0$, there exists $R_0 $ sufficiently large, such that $C_4 R^{1-\frac{3}{p}}R^{\frac{8}{3}\cdot \frac{3-p}{p}}\le C_4$, as $ R>R_0$. Hence the estimate \eqref{Casea2es} can be written as 
		\begin{equation}\label{CP3}
			Y(R) \le C_8 Y'(R), 
		\end{equation}
		where $C_8=C_3+C_4$. And then $Case(b_2)$ of Proposition \ref{IG1} follows from Lemma \ref{SV}.
	\end{proof}
	
	Now we are ready for the proof of Theorem \ref{th:01}.
	\begin{proof}[Proof of Theorem \ref{th:01}]
		$Case(a_1)$. $2\le p<3$. We need to estimate \eqref{ess1}-\eqref{ess2} and \eqref{esp} in a different way.
		Instead of \eqref{ess1} and \eqref{fj}, one has 
		\begin{equation}\label{ees1}
			\begin{aligned}
				&\left|\int_{\Omega}(1+|\D(\Bu)|^2)^{\frac{p-2}{2}}\D(\Bu):(\Bu\otimes\nabla\varphi_{R})\,\dd\Bx\right|\\
				\leq&\,
				C\int_{\OR}(|\D(\Bu)|+|\D(\Bu)|^{p-1}): \left|(\Bu \otimes \nabla \varphi_R)\right|\,\dd\Bx  \\
			\le &\,C \left( \|\nabla \Bu\|_{L^2(\OR)}\cdot \|\Bu\|_{L^2(\OR)} +\|\D(\Bu)\|_{L^p(\OR)}^{p-1} \cdot \|\Bu\|_{L^p(\OR)} \right)\\
			 \le& \,C\left( R^{\frac{1}{2}}\|\Bu\|_{L^{\infty}(\OR)}\|\nabla \Bu \|_{L^2{(\OR)}} +R^{\frac{1}{p}}\|\Bu\|_{L^{\infty}(\OR)}\|\D(\Bu)\|_{L^p(\OR)}^{p-1}   \right)
			\end{aligned}
		\end{equation}		
and 
\begin{equation}\label{fj1}
    \begin{aligned}
       &\left|  \int_{\Omega}\D(\Bu):(\Bu \otimes \nabla \varphi_R)\,\dd\Bx\right|+\left|\int_{\Omega} \nabla\varphi_R \cdot\nabla\Bu \cdot  \Bu\,\dd\Bx  \right|\\
        \le& \,C\left(\|\D(\Bu)\|_{L^2(\OR)}+\|\nabla \Bu\|_{L^2(\OR)}\right)\cdot \|\Bu\|_{L^2(\OR)}\\
        \le&\, CR^{\frac{1}{2}}\|\Bu\|_{L^{\infty}(\OR)}\|\nabla \Bu \|_{L^2{(\OR)}}.
    \end{aligned}
\end{equation}
		Using Poincar\'e inequality  and Korn's inequality \eqref{yski} yields
		\begin{equation}\label{eess2}
			\begin{aligned}
				\left| \int_{\Omega} \frac{1}{2} |\Bu|^2 \Bu \cdot \nabla \varphi_R \,\dd\Bx\right|  &\le CR\|\Bu\|^{4-p}_{L^{\infty}(\OR)}\int_{0}^{1} \int_{R-1}^{R} |(u^r,u^\theta,u^z)|^{p-1}\,\dd r \,\dd z \\
				& \le CR\cdot R^{\frac{1}{p}-1}\|\Bu\|^{4-p}_{L^\infty(\OR)} \|\nabla \Bu \|_{L^p(\OR)}^{p-1}\\
				&\le  CR^{\frac{1}{p}}\|\Bu\|_{L^{\infty}(\OR)}^{4-p}\|\D(\Bu)\|_{L^p(\OR)}^{p-1}.
			\end{aligned}
		\end{equation}
		Compared with \eqref{esp}, after making a subtle refinement to the estimate for the pressure term, one obtains
		\begin{equation}\label{espress1}
			\begin{split}
				\left|\int_{\Omega}P\Bu\cdot\nabla\varphi_{R}\,\dd\Bx\right| 
				&= 	 \left|\int_{\OR}(1+|\D(\Bu)|^2)^{\frac{p-2}{2}}\D(\Bu):\D\BPsi_{R} \,\dd\Bx-\int_{\OR}(\Bu \cdot \nabla )\BPsi_{R} \cdot \Bu\,\dd\Bx \right| \\
				&\leq
				C\int_{\OR}(|\D(\Bu)|+|\D(\Bu)|^{p-1}): \left|\D\BPsi_{R}\right|\,\dd\Bx \\
				&\quad + C\|\Bu\|_{L^{\infty}(\OR)}^{3-p}\|\nabla \BPsi_{R} \|_{L^p(\OR)}\cdot \|\Bu\|^{p-1}_{L^p(\OR)} \\
				& \le C\left( R^{\frac{1}{2}}\|\Bu\|_{L^{\infty}(\OR)}\|\nabla \Bu \|_{L^2{(\OR)}} +R^{\frac{1}{p}}\|\Bu\|_{L^{\infty}(\OR)}\|\D(\Bu)\|_{L^p(\OR)}^{p-1}   \right)\\
				&\quad +CR^\frac{1}{p} \|\Bu\|_{L^{\infty}(\OR)}^{4-p}\|\D(\Bu)\|_{L^p(\OR)}^{p-1}.
			\end{split}
		\end{equation}
	The above computations imply
		\begin{equation}\label{esssssss1}
			Y(R) \le  C_9 R^{\frac{1}{2}}\|\Bu\|_{L^{\infty}(\OR)}[Y'(R)]^{\frac{1}{2}}+C_{10}R^{\frac{1}{p}}\left(\|\Bu\|_{L^{\infty}(\OR)}+\|\Bu\|_{L^{\infty}(\OR)}^{4-p}\right)[Y'(R)]^{1-\frac{1}{p}}, 
		\end{equation}
		where $Y(R)$ is defined in \eqref{YR}.
	
	Suppose $\Bu$ is not identically zero, according to Proposition \ref{IG1}, it holds that 
	\begin{equation}\label{suppose-1}
		\varliminf_{R\rightarrow +\infty}R^{-\frac{6-p}{3-p}}Y(R)>0.
	\end{equation}
	Equivalently, there exists some $R_0>1$, $C_0>0$, such that 
		\begin{equation}\label{caseb1}
				R^{-\frac{6-p}{3-p}}Y(R) \ge C_0,   \quad \text{for any} \,\, R\ge R_0.
			\end{equation}	
	And taking \eqref{caseb1} into \eqref{Casea1es}, one has
	\begin{equation}
				 C_1 Y'(R)+ C_2 R^{1-\frac{3}{p}}[Y'(R)]^{\frac{3}{p}}\ge Y(R) \ge C_0 R^{\frac{6-p}{3-p}},\quad \text{for any} \, \, R\geq R_0. 
			\end{equation}
	Hence, it holds that
		\be\label{caseb1esp}
	Y'(R) \ge CR^{\frac{3}{3-p}}.
	\ee
	From \eqref{esssssss1}, one arrives at
		\be\label{casebs1}
		\begin{aligned}
			Y(R) &\le  C_9 R^{\frac{1}{2}}\|\Bu\|_{L^{\infty}(\OR)}[Y'(R)]^{\frac{1}{p}-\frac{1}{2}+\frac{p-1}{p}}+C_{10}R^{\frac{1}{p}}\left(\|\Bu\|_{L^{\infty}(\OR)}+\|\Bu\|_{L^{\infty}(\OR)}^{4-p}\right)[Y'(R)]^{\frac{p-1}{p}} \\
			&\le C_{11}R^{\frac{1}{p}}\left(\|\Bu\|_{L^{\infty}(\OR)}+\|\Bu\|_{L^{\infty}(\OR)}^{4-p}\right)[Y'(R)]^{\frac{p-1}{p}},
		\end{aligned}
		\ee
	where in the last inequality we have used  \eqref{caseb1esp} to obtain
		\[
		R^{\frac{1}{2}-\frac{1}{p}}[Y'(R)]^{\frac{1}{p}-\frac{1}{2}} \le CR^{\frac{2-p}{2(3-p)}} \le C , \quad \text{for any} \, \, R\geq R_0.
		\]
	
Note that $\Bu$ satisfies \eqref{LIes1}, i.e., for any small $\epsilon$>0, there exists a $R_0(\epsilon)>2$ such that 
		\be\label{peeeeeeess1}
		\|\Bu\|_{L^{\infty}(\OR)} \le \epsilon R^{\frac{1}{3-p}}, \quad  \text{for any} \, R \ge R_0(\epsilon).
		\ee
	Taking the assumption \eqref{peeeeeeess1} into \eqref{casebs1}, one has 
		\begin{equation}\label{escaseb1l}
			\frac{Y'(R)}{[Y(R)]^{\frac{p}{p-1}}} \ge [C_{11}\epsilon^{(4-p)}]^{-\frac{p}{p-1}}R^{-(\frac{1}{p}+\frac{4-p}{3-p})\cdot \frac{p}{p-1}}. 
		\end{equation}
		If $\Bu$ is not zero, according to $Case (b_1)$ of Proposition \ref{IG1}, $Y(R)$ must be unbounded as $R \rightarrow +\infty$. For every $R$ sufficiently large, integrating \eqref{escaseb1l} over $[R,+\infty)$ one arrives at 
		\be\label{ppeess1n} 
		Y(R)\cdot R^{-\frac{6-p}{3-p}} \le (C_{11}\epsilon^{(4-p)})^p.
		\ee
		Since $\epsilon$ can be arbitrarily small, this leads to a contradiction with \eqref{caseb1}. Hence the proof for $Case (a_1)$ is completed.

		$Case (a_2)$. $p\ge 3$. Estimates \eqref{ees1} and \eqref{fj1} are similar to $Case(a_1)$, it is necessary to make a slight adjustment to the estimates  \eqref{eess2} and \eqref{espress1}. Using Poincar\'e inequality yields
		\begin{equation}\label{eesb2}
			\begin{aligned}
				\left| \int_{\Omega} \frac{1}{2} |\Bu|^2 \Bu \cdot \nabla \varphi_R\, \dd \Bx\right|  &\le CR\|\Bu\|^{2}_{L^{\infty}(\OR)}\int_{0}^{1} \int_{R-1}^{R} |(u^r,u^\theta,u^z)|\,\dd r \,\dd z \\
				& \le CR^{\frac{1}{2}}\|\Bu\|^{2}_{L^\infty(\OR)} \|\nabla \Bu \|_{L^2(\OR)}.\\
			\end{aligned}
		\end{equation}
		The similar computations as \eqref{espress1} show that 
		\begin{equation}\label{espressb2}
			\begin{split}
				\left|\int_{\Omega}P\Bu\cdot\nabla\varphi_{R}\,\dd\Bx\right| 
				&= 	 \left|\int_{\OR}(1+|\D(\Bu)|^2)^{\frac{p-2}{2}}\D(\Bu):\D\BPsi_{R} \,\dd\Bx-\int_{\OR}(\Bu \cdot \nabla )\BPsi_{R} \cdot \Bu\,\dd\Bx \right| \\
				&\leq
				C\int_{\OR}(|\D(\Bu)|+|\D(\Bu)|^{p-1}): \left|\D\BPsi_{R} \right|\,\dd\Bx  \\
				&\quad + C\|\nabla \BPsi_{R} \|_{L^2(\OR)}\cdot \|\Bu\|^{2}_{L^4(\OR)} \\
				& \le C\left( R^{\frac{1}{2}}\|\Bu\|_{L^{\infty}(\OR)}\|\nabla \Bu \|_{L^2{(\OR)}} +R^{\frac{1}{p}}\|\Bu\|_{L^{\infty}(\OR)}\|\D(\Bu)\|_{L^p(\OR)}^{p-1}   \right)\\
				&\quad +CR^\frac{1}{2} \|\Bu\|_{L^{\infty}(\OR)}^{2}\|\nabla \Bu\|_{L^2(\OR)}.
			\end{split}
		\end{equation}
		Combining the estimates  \eqref{ees1}-\eqref{fj1} and \eqref{eesb2}-\eqref{espressb2}, one derives
		\begin{equation}\label{esb2es}
			Y(R) \le  C_{12} R^{\frac{1}{2}}\left(\|\Bu\|_{L^{\infty}(\OR)}+\|\Bu\|_{L^{\infty}(\OR)}^{2}\right)[Y'(R)]^{\frac{1}{2}}+C_{13}R^{\frac{1}{p}}\|\Bu\|_{L^{\infty}(\OR)}[Y'(R)]^{1-\frac{1}{p}}. 
		\end{equation}
		Similarly, according to $Case (b_2)$ of Proposition \ref{IG1}, there exists a constant $C_{\ast}(p)=C_8$, such that if $\Bu$ satisfies
		\[
		\varliminf_{R\rightarrow +\infty}e^{-\frac{R}{C_8} }Y(R)=0,
		\]
		then $\Bu \equiv0$, and thus $Case (a_2)$ holds.\\
		Otherwise, if 
		\[
		\varliminf_{R\rightarrow +\infty}e^{-\frac{R}{C_8}}Y(R)\ne0,
		\]
		then there exist  $C_0$ and $R_0>0$ such that
		\begin{equation}\label{caseb2}
			e^{-\frac{R}{C_8}}Y(R) \ge C_0,   \quad  \text{for any } \, R\ge R_0.
		\end{equation}
		Combining \eqref{Casea2es} and \eqref{caseb2}, one obtains
		\begin{equation}\label{1}
			\begin{aligned}
				C_3Y'(R) \ge \frac{1}{2}C_0 e^{ \frac{R}{C_8}} \quad \text{or } \quad C_4 R^{\frac{p-3}{p}}[Y'(R)]^{\frac{3}{p}}\ge \frac{1}{2}C_0 e^{ \frac{R}{C_8}}.
			\end{aligned} 
		\end{equation}
	Thus, it holds that
		\be\label{caseb2es}
		Y'(R) \ge \frac{C_0}{2C_3}e^{\frac{R}{C_8}}.
		\ee
	Hence  $Y(R)$ in the estimate \eqref{esb2es} can be reduced to 
		\be\label{casebs1n}
		\begin{aligned}
			Y(R) &\le  C_{12} R^{\frac{1}{2}}\left(\|\Bu\|_{L^{\infty}(\OR)}+\|\Bu\|_{L^{\infty}(\OR)}^{2}\right)[Y'(R)]^{\frac{1}{p}-\frac{1}{2}+\frac{p-1}{p}}+C_{13}R^{\frac{1}{p}}\|\Bu\|_{L^{\infty}(\OR)}[Y'(R)]^{\frac{p-1}{p}} \\
			&\le C_{14}R^{\frac{1}{p}}\left(\|\Bu\|_{L^{\infty}(\OR)}+\|\Bu\|_{L^{\infty}(\OR)}^{2}\right)[Y'(R)]^{\frac{p-1}{p}},
		\end{aligned}
		\ee
		where the last inequality is due to
		\[
		R^{\frac{1}{2}-\frac{1}{p}}[Y'(R)]^{\frac{1}{p}-\frac{1}{2}} \le \left(\frac{C_0}{2C_{3}}\right)^{\frac{2-p}{2p}}R^{\frac{1}{2}-\frac{1}{p}}\cdot e^{\frac{R}{C_8}\cdot\frac{2-p}{2p}} \le C,   \quad \text{for any}\,\, p\ge 3.
		\]

		Suppose $\Bu$ is not identically equal to zero and $\Bu$ satisfies \eqref{LIes2}. For any small $\epsilon$>0, there exist some constants $R_0(\epsilon)>2$ and $C(p)=3C_8p>0$ such that 
		\be\label{esp>3}
		\|\Bu\|_{L^{\infty}(\OR)} \le \epsilon e^{\frac{ R}{3C_8 p}}, \quad  \text{for any } \, R \ge R_0(\epsilon).
		\ee
		Hence the inequality \eqref{casebs1n} implies that 
		\begin{equation}
		    \begin{aligned}
		        Y(R) &\le C_{14}R^{\frac{1}{p}}\left(\|\Bu\|_{L^{\infty}(\OR)}+\|\Bu\|_{L^{\infty}(\OR)}^{2}\right)[Y'(R)]^{\frac{p-1}{p}}\\
                &\le  C_{15} e^{\frac{R}{3C_8 p}}\cdot \epsilon^2 e^{\frac{2R}{3C_8 p}}[Y'(R)]^{\frac{p-1}{p}} \le  \epsilon C_{15}e^{\frac{R}{C_8 p}}[Y'(R)]^{\frac{p-1}{p}} ,
		    \end{aligned}
		\end{equation}
		and one obtains
		\begin{equation}\label{finalb2}
			\begin{aligned}
				\frac{Y'(R)}{[Y(R)]^{\frac{p}{p-1}}} \ge [\epsilon C_{15    }]^{-\frac{p}{p-1}}e^{-\frac{R}{C_8 p}\cdot \frac{p}{p-1}}.
			\end{aligned}  
		\end{equation}
		If $\Bu$ is not identically equal zero, according to $Case (b_2)$ of Propositon \ref{IG1}, $Y(R)$ must be unbounded as $R \rightarrow +\infty$. For every $R$ sufficiently large, integrating \eqref{finalb2} over $[R,+\infty)$ one arrives at 
		\be\label{ppeess1} 
		Y(R)\cdot e^{-\frac{R}{C_8}} \le C(\epsilon C_{15})^p.
		\ee
		Since $\epsilon$ can be arbitrarily small, this implies \eqref{pes2} and leads to a contradiction with the assumption that $\Bu$ is not identically zero. This completes the proof of $Case(a_2)$ of Theorem \ref{th:01}.

	\end{proof}

	\section{GENERAL NON-NEWTONIAN FLUIDS IN A SLAB }\label{Sec4}
	
	This section is devoted to the study for the general solution of the non-Newtonian fluids \eqref{eqsteadynNfs}
	in a slab with no-slip boundary conditions \eqref{noslipboun}.  Before giving the proof of Theorem \ref{th:02}, we present a preliminary proposition.
\begin{pro}
    \label{IG2}
		Let $\Bu$ be a weak solution to the equations \eqref{eqsteadynNfs} in $\Omega=\mathbb{R}^2 \times (0,1)$ with no-slip boundary conditions \eqref{noslipboun}. Let $\mathcal{E}(R)$ be defined as in \eqref{D-integral}. Then the following holds:
        
		
		 $(d_1)$ For $2 \le p<3$, if $\Bu$ satisfies 
		\begin{equation}\label{3Da1}
			\varliminf_{R\rightarrow +\infty} (\ln R)^{-\frac{3}{3-p}}\mathcal{E}(R)=0,
		\end{equation}
        then $\Bu \equiv 0$.
        
		$(d_2)$ For  $p \ge 3$, there exists a constant $C_{\ast\ast}(p)>0$, such that if $\Bu$ satisfies   
		\begin{equation}\label{3Da2}
			\varliminf_{R\rightarrow +\infty}R^{-\frac{1}{C_{\ast\ast}(p)}}\mathcal{E}(R)=0,
		\end{equation}
        then $\Bu \equiv 0$.
\end{pro}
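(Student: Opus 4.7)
The plan is to establish Saint-Venant differential inequalities for a weighted Dirichlet integral and invoke Cases (c) and (e) of Lemma~\ref{SV}. Since the axisymmetric structure is unavailable, I would work on the wide annular region $Z_R$ with the radial cutoff $\varphi_R$ equal to $1$ on $B_{R/2}$ and supported in $B_R$ (so $|\nabla\varphi_R|\lesssim R^{-1}$), and define $Y(R) = \int_{\Omega}(|\nabla\Bu|^{2} + |\D(\Bu)|^{p})\varphi_R\,\dd\Bx$. The key substitute for the axisymmetric flux identity $\int_0^1 ru^r\,\dd z=0$ is that the divergence-free condition together with the no-slip boundary gives $\int_{\partial B_\rho\times(0,1)} u^r\,\dd S=0$ for every $\rho>0$, so $g_R := \Bu\cdot\nabla\varphi_R = u^r\varphi_R'(r)$ is mean-zero on $Z_R$. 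Part~(3) of Lemma~\ref{Bogovskii} then furnishes a Bogovskii corrector $\BPsi_R\in W_0^{1,p}(Z_R;\mathbb{R}^3)$ with $\text{div}\,\BPsi_R=g_R$ and $\|\nabla\BPsi_R\|_{L^p(Z_R)} \le CR\|g_R\|_{L^p(Z_R)} \le C\|\nabla\Bu\|_{L^p(Z_R)}$, where the $R$-factor from the Bogovskii estimate cancels the $R^{-1}$ from $|\nabla\varphi_R|$ after applying Poincar\'e in $z$.

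Testing the momentum equation against the divergence-free field $\Bu\varphi_R-\BPsi_R$ eliminates the pressure and, after using Korn's inequality on $Z_R$ (with an $R$-independent constant, to be verified separately), produces a coercive identity whose left-hand side dominates $\int(|\nabla\Bu|^2+|\D(\Bu)|^p)\varphi_R\,\dd\Bx$. The right-hand side splits into four boundary-type pieces: the viscous term $\int \BA_p(\Bu){:}(\Bu\otimes\nabla\varphi_R)$, the Bogovskii viscous term $\int \BA_p(\Bu){:}\D\BPsi_R$, the cubic term $\int \tfrac12|\Bu|^2 u^r\varphi_R'$, and the convective cross term $\int(\Bu\cdot\nabla)\BPsi_R\cdot\Bu$. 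Noting that $|Z_R|\sim R^2$ and the chosen cutoff yields $\int_{Z_R}(|\nabla\Bu|^2+|\D(\Bu)|^p)\sim R\,Y'(R)$, H\"older combined with Poincar\'e in $z$ and Korn reduces the first three pieces to a combination of $CR\,Y'(R)$ and $CR^{3/p}[Y'(R)]^{3/p}$; the decisive contribution comes from the cross term, where H\"older together with Gagliardo-Nirenberg (Lemma~\ref{G-Ninequality}) applied on $Z_R$ and the Bogovskii estimate above delivers the sharp nonlinear factor $CR^{3/p}[Y'(R)]^{3/p}$.

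Combining all pieces yields, for $2\leq p<3$, the Saint-Venant inequality $Y(R) \le C_5 R\,Y'(R) + C_6 R^{3/p}[Y'(R)]^{3/p}$, and Case~(c) of Lemma~\ref{SV} forces $\varliminf_{R\to+\infty}(\ln R)^{-3/(3-p)}Y(R)>0$ whenever $Y\not\equiv 0$, contradicting \eqref{3Da1} and establishing $(d_1)$. For $p\geq 3$ I would run a bootstrap identical in spirit to that in Case~$(b_2)$ of Proposition~\ref{IG1}: first derive the inequality above, use an intermediate step of Lemma~\ref{SV} to obtain a lower bound on $Y'(R)$ strong enough to absorb the nonlinear $R^{3/p}[Y'(R)]^{3/p}$ term into the linear one, and conclude with $Y(R)\le C_8 R\,Y'(R)$; Case~(e) of Lemma~\ref{SV} then yields the polynomial lower bound $Y(R)\gtrsim R^{1/C_8}$, so taking $C_{\ast\ast}(p)=C_8$ contradicts \eqref{3Da2}.

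The main obstacles are twofold. First, verifying Korn's inequality on $Z_R$ with a constant independent of $R$ for vector fields vanishing on $z=0,1$: Lemma~\ref{korn} covers $\mathscr{O}_R$ and $Q_R$ but not the wide shell $Z_R$ directly, so one needs either to partition $Z_R$ into overlapping unit-width shells and paste Case~(1) uniformly, or to replay the rigid-motion identification on $Z_R$ directly and control the constant by a scaling plus compactness argument. Second, in the absence of pointwise bounds on $\Bu$, every cubic or mixed term must be closed purely by interpolation; one must carefully balance the measure $|Z_R|\sim R^2$, the slab-height Poincar\'e factor, and the Bogovskii scaling so that the terminal exponent is exactly $R^{3/p}$. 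In Proposition~\ref{IG1} the identity $\int_0^1 ru^r\,\dd z=0$ allowed a much more economical Bogovskii construction on the thin shell $\mathscr{O}_R$, whereas here only the weaker flux identity is available and the $R$-factor of Part~(3) of Lemma~\ref{Bogovskii} must be absorbed on the fly.
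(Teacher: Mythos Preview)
Your overall plan matches the paper's: cutoff on $Z_R$, Bogovskii from Part~(3) of Lemma~\ref{Bogovskii} for the pressure (the paper solves $\text{div}\,\bBV = u^r$ rather than $\text{div}\,\BPsi_R = u^r\varphi_R'$, but this is cosmetic), and closure to a Saint-Venant inequality feeding into Cases~(c) and~(e) of Lemma~\ref{SV}.

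The one real gap is the Korn step on $Z_R$. Neither of your proposed fixes works: Case~(1) of Lemma~\ref{korn} is stated only for \emph{axisymmetric} fields (its proof reduces to the rectangle $D_R$ via the axisymmetry), so partitioning $Z_R$ into unit-width shells and invoking Case~(1) is unavailable for general $\Bu$; and a direct scaling-plus-compactness argument on $Z_R$ fails because $Z_R$ rescaled by $1/R$ degenerates to a shell of height $1/R$, so the constant would blow up. The paper's resolution is the one you nearly wrote down but did not connect: since $\Bu=0$ on $\{z=0,1\}$, extend $\Bu$ by zero in the $z$-direction from $Z_R$ to $Q_R=(B_R\setminus \overline{B_{R/2}})\times(0,R)$ and apply Case~(2) of Lemma~\ref{korn} there. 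Because $Q_R$ is an $R$-dilate of $Q_1$, the Korn constant is uniform in $R$, while $\|\D(\Bu)\|_{L^p(Q_R)}=\|\D(\Bu)\|_{L^p(Z_R)}$ and $\|\nabla\Bu\|_{L^p(Q_R)}=\|\nabla\Bu\|_{L^p(Z_R)}$. This same extension supplies the Poincar\'e/Korn input needed to close the $L^{2p/(p-1)}$ estimate on $\Bu$ via Gagliardo--Nirenberg on $Q_R$.

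On the $p\ge 3$ case: your bootstrap (lower-bound $RY'(R)\gtrsim 1$ from nontriviality, then absorb $R^{3/p}[Y'(R)]^{3/p}$ into $RY'(R)$) is valid. The paper instead skips the bootstrap by Lebesgue-interpolating the cubic term between $L^2$ and $L^p$: with $\theta=\tfrac{2(p-3)}{3(p-2)}$ one has $\|\Bu\|_{L^3}^3 \le \|\Bu\|_{L^2}^{3\theta}\|\Bu\|_{L^p}^{3(1-\theta)} \lesssim \|\nabla\Bu\|_{L^2}^2 + \|\D(\Bu)\|_{L^p}^p$ by Young's inequality (the exponents are chosen precisely so that $\tfrac{6(1-\theta)}{2-3\theta}=p$), landing directly on $\mathcal{Y}(R)\le \widetilde{C}_3\,R\,\mathcal{Y}'(R)$. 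Either route yields Case~(e) of Lemma~\ref{SV} and hence $(d_2)$.
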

\begin{proof}
Instead of the cut-off function $\varphi_{R}(r)$ in Section \ref{Sec3}, one introduces a new cut-off function $\tau (s)$ satisfying 
	\begin{equation}\label{nco}
		\tau(s) = \left\{ \ba
		&1,\ \ \ \ \ \ \ \ \ \ 0\le s < \frac{1}{2}, \\
		&-2s+2,\ \ \ \ \frac{1}{2} \leq s \leq 1, \\
		&0, \ \ \ \ \ \ \ \ \ \ s > 1.
		\ea  \right.
	\end{equation}
	Set $\tau_R(x')=\tau(\frac{|x'|}{R})$, where $x'=(x_1,x_2)\in \mathbb{R}^2$ and $R$ is a large positive number. Multiplying the momentum equation  in \eqref{eqsteadynNfs}
	with $\Bu\tau_{R}$, integrating by parts yields
	\begin{equation}\label{EQF1}
		\begin{split}
			&\int_{\Omega}(1+|\D(\Bu)|^2)^{\frac{p-2}{2}}|\D(\Bu)|^2\tau_{R}\,\dd\Bx\\
			=&
			-\int_{\Omega}(1+|\D(\Bu)|^2)^{\frac{p-2}{2}}\D(\Bu):(\Bu\otimes\nabla\tau_{R})\,\dd\Bx+\int_{\Omega}\frac{1}{2}|\Bu|^{2}\Bu\cdot
			\nabla\tau_{R}\,\dd\Bx
			+\int_{\Omega}P\Bu\cdot\nabla\tau_{R}\,\dd\Bx.
		\end{split}
	\end{equation}
Analogous to \eqref{Es}, one can derive from \eqref{EQF1} that
\begin{equation}\label{Es1}
			\begin{split}
				C\int_{\Omega}(|\nabla\Bu|^{2}+|\D(\Bu)|^p)\tau_{R}\,\dd\Bx &\le  \int_{\Omega}(1+|\D(\Bu)|^2)^{\frac{p-2}{2}}|\D(\Bu)|^2\tau_{R}\,\dd\Bx\\
               &\quad +\int_{\Omega}\D(\Bu):(\Bu \otimes \nabla \tau_{R})\,\dd\Bx-\int_{\Omega} \nabla\tau_{R} \cdot\nabla\Bu \cdot  \Bu\,\dd\Bx \\
				&\leq \left|\int_{\Omega}(1+|\D(\Bu)|^2)^{\frac{p-2}{2}}\D(\Bu):(\Bu\otimes\nabla\tau_{R})\,\dd\Bx\right|\\
                 &\quad +\left|  \int_{\Omega}\D(\Bu):(\Bu \otimes \nabla \tau_{R})\,\dd\Bx\right|+\left|\int_{\Omega} \nabla\tau_{R} \cdot \nabla\Bu \cdot \Bu\,\dd\Bx  \right|\\
                &\quad +\left|\int_{\Omega}\frac{1}{2}|\Bu|^{2}\Bu\cdot
				\nabla\tau_{R}\,\dd\Bx\right|
                +\left|\int_{\Omega}P\Bu\cdot\nabla\tau_{R}\,\dd\Bx\right|
               .
			\end{split}
		\end{equation}

$Case\,(c_1)$. $2\le p <3$. One employs the same technique as that for the proof of Theorem \ref{th:01}. First, due to the no-slip boundary conditions \eqref{noslipboun}, one can make zero extension $\Bu$ from $Z_R$ to $Q_R$, and by Korn's inequality \eqref{ki2}, one has 
		\begin{equation}\label{utoDu}
		    \begin{aligned}
		        \|\Bu\|_{L^p(Z_R)}=\|\Bu\|_{L^p(Q_R)} \le \|\nabla \Bu\|_{L^p(Q_R)}\le C\|\D(\Bu)\|_{L^p(Q_R)}= C \|\D(\Bu)\|_{L^p(Z_R)},
		    \end{aligned}
		\end{equation}
		where the constant $C$ is independent of $R$.
        By H{\"o}lder's inequality, Poincar\'e  inequality and  \eqref{utoDu}, one  obtains
		\begin{equation}\label{First term 3D}
			\begin{aligned}
				&\left|\int_{\Omega}(1+|\D(\Bu)|^2)^{\frac{p-2}{2}}\D(\Bu):  (\Bu\otimes\nabla\tau_{R})\,\dd\Bx\right|\\ 	\leq &\,
				\int_{Z_R}(|\D(\Bu)|+|\D(\Bu)|^{p-1}):\left|(\Bu\otimes\nabla\tau_{R})\right|\,\dd\Bx\\
			\le&\,\frac{C}{R} \left(\|\nabla\Bu \|_{L^2(Z_R)}\cdot \|\Bu\|_{L^2(Z_R)}+\|\D(\Bu)\|_{L^p(Z_R)}^{p-1}\cdot \|\Bu\|_{L^p(Z_R)}\right) \\
				\le& \,\frac{C}{R}(\| \nabla \Bu\|_{L^2(Z_R)}^2+ \|\D(\Bu)\|_{L^p(Z_R)}^p)
			\end{aligned}
		\end{equation}
and 
\begin{equation}\label{fj2}
    \begin{aligned}
       & \left|  \int_{\Omega}\D(\Bu):(\Bu \otimes \nabla \tau_R)\,\dd\Bx\right|+\left|\int_{\Omega}\nabla\tau_R \cdot \nabla\Bu \cdot  \Bu\,\dd\Bx  \right|\\ \le&\, \frac{C}{R}\left( \|\D(\Bu)\|_{L^2(Z_R)}+\|\nabla \Bu\|_{L^2(Z_R)}\right)\cdot \|\Bu\|_{L^2(Z_R)}
        \le\,\frac{C}{R}\| \nabla \Bu\|_{L^2(Z_R)}^2.
    \end{aligned}
\end{equation}
    Let $q=\frac{2p}{p-1}$,  by the 
    Gagliardo-Nirenberg interpolation inequality \eqref{GN} and \eqref{utoDu}, one has
    \begin{equation}\label{L2Pestims}
    \begin{aligned}
  \|\Bu\|_{L^{\frac{2p}{p-1}}(Z_R)}=\|\Bu\|_{L^q(Q_R)} \le 
  C\|\nabla\Bu\|^{\theta}_{L^p(Q_R)}  \|\Bu\|^{1-\theta}_{L^p(Q_R)}+C\|\Bu\|_{L^p(Q_R)}\le
  C\|\D(\Bu)\|_{L^p(Z_R)},
    \end{aligned}
    \end{equation}
    where $\theta=\frac{3(q-p)}{pq}$.
		By Poincaré inequality,   \eqref{utoDu} and \eqref{L2Pestims} yields
		\begin{equation}\label{non-linear term 3D}
			\begin{aligned}
				\left| \int_{\Omega} \frac{1}{2} |\Bu|^2 \Bu \cdot \nabla \tau_R \,\dd\Bx\right|
                &\le \frac{C}{R}\|u^r\|_{L^p(Z_R)}\cdot \|\Bu\|^2_{L^{\frac{2p}{p-1}}(Z_R)}\\
				&\le \frac{C}{R}\|\nabla \Bu\|_{L^p(Z_R)}\cdot \|\D(\Bu)\|^2_{L^{p}(Z_R)}\\
                &\le  \frac{C}{R}\|\D(\Bu)\|_{L^p(Z_R)}^3,
			\end{aligned}
		\end{equation}
		where $C$ is independent of $R$.
		
		For the estimate of the pressure term in \eqref{Es1},  one obtains
        \[
     \int_{\Omega}P\Bu\cdot\nabla\tau_{R}\,\dd\Bx=-\frac{1}{R}\int_{Z_{R}}Pu^{r}\,\dd\Bx.
        \]
   The divergence free condition for general  solutions in cylindrical coordinates is
   \[
  \p_{r}u^{r}+\frac{u^{r}}{r}+\frac{\p_{\theta}u^{\theta}}{r}+\p_{z}u^{z}=0.
   \]
Hence, for fixed $r\geq0$, one obtains
\begin{equation}
\p_{r}\int_{0}^{1}\int_{0}^{2\pi}ru^{r}\,\dd\theta\dd z=-\int_{0}^{1}\int_{0}^{2\pi}\p_{\theta}u^{\theta}+\p_{z}(ru^{z})\,\dd\theta\dd z=0.
\end{equation}
 Thus, it holds that
 \begin{equation}
\int_{0}^{1}\int_{0}^{2\pi}ru^{r}\,\dd\theta\dd z=0 \,\,\,\, \text{and}  \,\,\,\,
\int_{0}^{1}\int_{0}^{2\pi}\int_{\frac{R}{2}}^{R}u^{r}\,\dd r\dd\theta\dd z=0.
 \end{equation}
By Bogovskii map of Lemma \ref{Bogovskii}, there is a vector valued function $\bBV\in W^{1,p}_{0}(Z_{R},\mathbb{R}^3)$, such that
      \begin{equation}
        {\rm div}\bBV=u^{r}, \quad \text{in} \, Z_{R},
          \end{equation}
          with the estimate
		\begin{equation}\label{esbm3D}
			\|\nabla \bBV\|_{L^p(Z_R)}\leq CR\|u^{r}\|_{L^{p}(Z_R)},
		\end{equation}
		where $C$ is independent of $R$.

		Therefore, combining \eqref{esbm3D}, Poincar\'e inequality, H{\"o}lder's inequality  and \eqref{L2Pestims}, one obtains 
		\begin{equation}\label{3Da1esp}
			\begin{split}
				\left|\int_{\Omega}P\Bu\cdot\nabla\tau_{R}\,\dd\Bx\right| &\le \frac{C}{R}\left|  \int_{Z_R} P u^r\,\dd\Bx \right|= \frac{C}{R}\left|  \int_{Z_R} P \text{div} \bBV \,\dd\Bx \right|= \frac{C}{R}\left|  \int_{Z_R} -\nabla P \cdot \bBV\,\dd\Bx \right| \\
				&= \frac{C}{R}\left|  \int_{Z_R}     [-\text{div} A_p(\Bu)+(\Bu \cdot \nabla )\Bu ]   \cdot \bBV\,\dd\Bx \right|\\
				&= 	 \frac{C}{R}\left|\int_{Z_R}(1+|\D(\Bu)|^2)^{\frac{p-2}{2}}\D(\Bu):\D \bBV \,\dd\Bx-\int_{Z_R}(\Bu \cdot \nabla )\bBV \cdot \Bu\,\dd\Bx \right| \\
				&\le \frac{C}{R} \left(R\|\D(\Bu)\|_{L^2(Z_R)}\cdot \|u^r\|_{L^2(Z_R)}+R\|\D(\Bu)\|_{L^p(Z_R)}^{p-1}\cdot \|u^r\|_{L^p(Z_R)} \right)\\  
				&\quad + \frac{C}{R}\|\nabla \bBV\|_{L^p(Z_R)}\cdot \|\Bu\|^2_{L^{\frac{2p}{p-1}}(Z_R)}\\
                &\le  C\left(\|\nabla \Bu\|_{L^2(Z_R)}^2+\|\D(\Bu)\|_{L^p(Z_R)}^p  \right)
				+ C\|u^r\|_{L^p(Z_R)}\cdot \|\Bu\|^2_{L^{\frac{2p}{p-1}}(Z_R)}\\
                &\le  C\left(\|\nabla \Bu\|_{L^2(Z_R)}^2+\|\D(\Bu)\|_{L^p(Z_R)}^p  \right)+C\|\D(\Bu)\|_{L^p(Z_R)}^3.
			\end{split}
		\end{equation} 
     Combining \eqref{First term 3D}-\eqref{fj2}, \eqref{non-linear term 3D} and  \eqref{3Da1esp}, one arrives at
		\begin{equation}\label{3Da1f}
			\int_{\Omega}(|\nabla\Bu|^{2}+|\D(\Bu)|^p)\tau_{R}\,\dd\Bx \le  C \left(\|\nabla \Bu\|_{L^2(Z_R)}^2+\|\D(\Bu)\|_{L^p(Z_R)}^p  \right)+C\|\D(\Bu)\|_{L^p(Z_R)}^3.
		\end{equation}
		
		Let 
		\begin{equation}\label{ZR}
			\begin{aligned}
				\mathcal{Y}(R)=
				\int_{0}^{1}\iint_{\mathbb{R}^{2}}
				(|\nabla\Bu|^{2}+|\D(\Bu)|^p)\tau_{R}\left(\sqrt{x_{1}^{2}+x_{2}^{2}}\right)
				\, \dd x_{1} \dd x_{2} \dd x_{3} ,
			\end{aligned}
		\end{equation}
		and  straightforward computations give
		\[
		\mathcal{Y}^{\prime}(R) =\frac{C}{R}\int_{Z_R}(|\nabla\Bu|^{2}+|\D(\Bu)|^p )\,\dd\Bx.
		\]
		Hence the estimate \eqref{3Da1f} can be written as 
		\begin{equation}\label{3DCasea1es}
				\mathcal{Y}(R)\le \widetilde{C}_1R	\mathcal{Y}^{\prime}(R)+\widetilde{C}_2R^{\frac{3}{p}}	[\mathcal{Y}^{\prime}(R)]^{\frac{3}{p}}.  
		\end{equation}
    Then $Case\,(c_1)$ of Proposition \ref{IG2} follows from Lemma \ref{SV}.\\
		$Case\,(c_2)$. $p\ge 3$. If $p>3$, the estimate \eqref{First term 3D}-\eqref{fj2} is similar to $Case\,(c_1)$, except for a different approach in handling \eqref{non-linear term 3D} and \eqref{3Da1esp}, one has 
		\begin{equation}
			\begin{split}\label{3Da2 non linear term}
				\left|\int_{\Omega}\frac{1}{2}|\Bu|^{2}\Bu\cdot
				\nabla\tau_{R}\,\dd\Bx\right|&\le \frac{C}{R}\|\Bu\|^3_{L^3(Q_R)}
				\le \frac{C}{R}\|\Bu\|_{L^2(Q_R)}^{3\theta}\cdot \|\Bu\|_{L^p(Q_R)}^{3(1-\theta)} \\
				&\le  \frac{C}{R} \left( \|\Bu\|^2_{L^2(Z_R)}+\|\Bu\|^{\frac{6(1-\theta)}{2-3\theta}}_{L^p(Z_R)}\right)
				\le\frac{C}{R} \left( \|\nabla\Bu\|^2_{L^2(Z_R)}+\|\D(\Bu)\|^{p}_{L^p(Z_R)}\right),
			\end{split}
		\end{equation}
		where $\theta= \frac{2(p-3)}{3(p-2)}$.\\
		The similar computations as \eqref{3Da1esp} show that 
		\begin{equation}\label{3Da2esp}
			\begin{split}
				\left|\int_{\Omega}P\Bu\cdot\nabla\tau_{R}\,\dd\Bx\right| &\le \frac{C}{R} \left(R\|\D(\Bu)\|_{L^2(Z_R)}\cdot \|u^r\|_{L^2(Z_R)}+R\|\D(\Bu)\|_{L^p(Z_R)}^{p-1}\cdot \|u^r\|_{L^p(Z_R)} \right)\\   
				&\quad + C\|u^r\|_{L^p(Z_R)}\cdot \|\Bu\|^2_{L^{\frac{2p}{p-1}}(Z_R)}\\
				&\le  C\left(\|\nabla \Bu\|_{L^2(Z_R)}^2+\|\D(\Bu)\|_{L^p(Z_R)}^p  \right)+C\|\Bu\|_{L^2(Q_R)}^{2\theta_1}\cdot \|\Bu\|_{L^p(Q_R)}^{2(1-\theta_1)+1}\\
				&\le  C\left(\|\nabla \Bu\|_{L^2(Z_R)}^2+\|\D(\Bu)\|_{L^p(Z_R)}^p  \right)+C\left(\|\Bu\|_{L^2(Q_R)}^{2}+\|\Bu\|_{L^p(Q_R)}^{\frac{3-2\theta_1}{1-\theta_1}}\right)\\
				&\le C\left(\|\nabla \Bu\|_{L^2(Z_R)}^2+\|\D(\Bu)\|_{L^p(Z_R)}^p  \right),
			\end{split}
		\end{equation}
		where $\theta_1=\frac{p-3}{p-2}$.
		Combining \eqref{First term 3D}-\eqref{fj2}, \eqref{3Da2 non linear term} and \eqref{3Da2esp}, one derives 
		\begin{equation}\label{3DCasea2es}
			\mathcal{Y}(R)\le \widetilde{C}_3 R\mathcal{Y}^{\prime}(R).   
		\end{equation}

    For $p=3$. By the estimate \eqref{First term 3D}-\eqref{non-linear term 3D} and \eqref{3Da1esp}, one obtains the same estimate as \eqref{3DCasea2es}. Then $Case \,(c_2)$ of Proposition \ref{IG2} also follows from Lemma \ref{SV}.
\end{proof}
\begin{remark}
    The proof of Theorem \ref{DS} is essentially contained in that of Proposition \ref{IG2}, specifically, in the derivation of \eqref{3Da1f} and \eqref{3DCasea2es}.
\end{remark}

Now we are ready to  prove Theorem \ref{th:02}.
	\begin{proof}[Proof for Theorem \ref{th:02}]
		$Case \,(1)$. $ 2\le p<3$. One needs to estimate \eqref{First term 3D}-\eqref{fj2}, \eqref{non-linear term 3D} and \eqref{3Da1esp} in a different way. Using H{\"o}lder's inequality yields
		\begin{equation}\label{First term 3D d1}
			\begin{aligned}
				& \left|\int_{\Omega}(1+|\D(\Bu)|^2)^{\frac{p-2}{2}}\D(\Bu):(\Bu\otimes\nabla\tau_{R})\,\dd\Bx\right|
                \\
                \leq\, &
				C\int_{Z_R}(|\D(\Bu)|+|\D(\Bu)|^{p-1}):|\Bu\otimes\nabla\tau_{R}|\,\dd\Bx\\
					\le\, & \frac{C}{R} \left(\|\D(\Bu)\|_{L^2(Z_R)}\cdot \|\Bu\|_{L^2(Z_R)}+\|\D(\Bu)\|_{L^p(Z_R)}^{p-1}\cdot \|\Bu\|_{L^p(Z_R)} \right)\\
				\le \,& C\|\Bu\|_{L^{\infty}(Z_R)}\cdot\left( \| \nabla \Bu\|_{L^2(Z_R)}+ R^{\frac{2}{p}-1}\|\D(\Bu)\|_{L^p(Z_R)}^{p-1}\right)    	\end{aligned}
		\end{equation}
and 
\begin{equation}\label{fj3}
    \begin{aligned}
       & \left|  \int_{\Omega}\D(\Bu):\Bu \otimes \nabla \tau_R\,\dd\Bx\right|+\left|\int_{\Omega} \nabla\tau_R \cdot\nabla\Bu \cdot  \Bu\,\dd\Bx  \right|\\
       \le\, & \frac{C}{R} \left(\|\D(\Bu)\|_{L^2(Z_R)}+\|\nabla\Bu\|_{L^2(Z_R)}\right)\cdot \|\Bu\|_{L^2(Z_R)}\\
        \le\, & C\|\Bu\|_{L^{\infty}(Z_R)}\cdot\| \nabla \Bu\|_{L^2(Z_R)}.
    \end{aligned}
\end{equation}
		Instead of \eqref{non-linear term 3D}, one has 
		\begin{equation}\label{non-linear term 3D d1}
			\begin{aligned}
				\left| \int_{\Omega} \frac{1}{2} |\Bu|^2 \Bu \cdot \nabla \tau_R \,\dd\Bx\right| &\leq \frac{C}{R} \|u^r\|_{L^{\infty}(Z_R)}\|\Bu\|^{3-p}_{L^{\infty}(Z_R)}\int_{Z_R}|\Bu|^{p-1}\, \dd \Bx\\
				&\le CR^{\frac{2}{p}-1} \|u^r\|_{L^{\infty}(Z_R)}\|\Bu\|^{3-p}_{L^{\infty}(Z_R)}\|\D(\Bu)\|^{p-1}_{L^p(Z_R)}.
			\end{aligned}
		\end{equation}
		For the estimate of the pressure term, one obtains
		\begin{equation}\label{3Dd1esp}
			\begin{split}	\left|\int_{\Omega}P\Bu\cdot\nabla\tau_{R}\,\dd\Bx\right| 	&\le \frac{C}{R} \left(R\|\nabla\Bu\|_{L^2(Z_R)}\cdot \|u^r\|_{L^2(Z_R)}+R\|\D(\Bu)\|_{L^p(Z_R)}^{p-1}\cdot \|u^r\|_{L^p(Z_R)} \right)\\ 
				&\quad + C\|u^r\|_{L^p(Z_R)}\cdot \|\Bu\|^2_{L^{\frac{2p}{p-1}}(Z_R)}\\
                &\le C\left(R\|u^r\|_{L^\infty(Z_R)}\cdot\|\nabla\Bu\|_{L^2(Z_R)}
                +R^{\frac{2}{p}}\|u^r\|_{L^\infty(Z_R)} \cdot\|\D(\Bu)\|_{L^p(Z_R)}^{p-1} \right)\\ 
				&\quad + CR^{\frac{2}{p}}\|u^r\|_{L^\infty(Z_R)}\cdot \|\Bu\|^{3-p}_{L^\infty(Z_R)}\left(\int_{Z_{R}}|\Bu|^{p}\,\dd\Bx\right)^{\frac{p-1}{p}}\\
				&\le  C\left(R\|u^r\|_{L^{\infty}(Z_R)}\cdot\|\nabla \Bu\|_{L^2(Z_R)}+R^{\frac{2}{p}}\|u^r\|_{L^{\infty}(Z_R)}\cdot\|\D(\Bu)\|_{L^p(Z_R)}^{p-1}  \right)
				\\
				&\quad + CR^{\frac{2}{p}}\|u^r\|_{L^{\infty}(Z_R)}\|\Bu\|^{3-p}_{L^{\infty}(Z_R)}\cdot\| \D(\Bu)\|_{L^p(Z_R)}^{p-1}.\\
			\end{split}
		\end{equation}
		Combining \eqref{First term 3D d1}-\eqref{3Dd1esp}, one derives
        \begin{equation}
        \begin{split}
           & \int_{\Omega}(|\nabla\Bu|^{2}+|\D(\Bu)|^p)\tau_{R}\,\dd\Bx\\
           \le &
            C\left(\|\Bu\|_{L^{\infty}(Z_R)}+R\|u^r\|_{L^{\infty}(Z_R)}\right)\|\nabla\Bu\|_{L^{2}(Z_R)}\\
             &+C\left(R^{\frac{2}{p}-1}\|\Bu\|_{L^{\infty}(Z_R)}+R^{\frac{2}{p}}\|u^r\|_{L^{\infty}(Z_R)}\|\Bu\|^{3-p}_{L^{\infty}(Z_R)}\right)\| \D(\Bu)\|_{L^p(Z_R)}^{p-1}.
             \end{split}
        \end{equation}
Hence, one arrives at
		\begin{equation}\label{d1}
			\begin{aligned}
				\mathcal{Y}(R) &\le C\left(  R^{\frac{1}{2}}\|\Bu\|_{L^{\infty}(Z_R)}+R^{\frac{3}{2}}\|u^r\|_{L^{\infty}(Z_R)}\right)[\mathcal{Y}^{\prime}(R)]^{\frac{1}{2}}\\
				&\quad +C \left(R^{\frac{1}{p}}\|\Bu\|_{L^{\infty}(Z_R)}+R^{\frac{p+1}{p}}\|u^r\|_{L^{\infty}(Z_R)}\|\Bu\|^{3-p}_{L^{\infty}(Z_R)}\right)[\mathcal{Y}^{\prime}(R)]^{\frac{p-1}{p}},
			\end{aligned}
		\end{equation}
  where $\mathcal{Y}(R)$ is defined in \eqref{ZR}.   
  
	 Suppose $\mathcal{Y}(R)$ is not identically zero,  it follows from  Proposition \ref{IG2} that there exist $C_0$ and $R_0$ such that
		\begin{equation}\label{eqZRes}
		    \begin{aligned}
		       \mathcal{Y}(R)\ge C_0 (\ln R)^{\frac{3}{3-p}},  \quad \text{for any} \, R \ge R_0.
		    \end{aligned}
		\end{equation}
		Then from \eqref{3DCasea1es} one has 
		\[
		C_0 (\ln R)^{\frac{3}{3-p}} \le \widetilde{C}_1 R\mathcal{Y}^{\prime}(R)+\widetilde{C}_2 
        R^{\frac{3}{p}}[\mathcal{Y}^{\prime}(R)]^{\frac{3}{p}}.
		\]
	It holds that
		\begin{equation}
			\mathcal{Y}^{\prime}(R) \ge C R^{-1}(\ln R)^{\frac{p}{3-p}}.
		\end{equation}
		Utilizing \eqref{3Db}, $\mathcal{Y}(R)$ in estimate \eqref{d1}  can be reduced to
		\begin{equation}\label{3Dd1}
			\begin{aligned}
				\mathcal{Y}(R) &\le C R^{\frac{1}{2}}[\mathcal{Y}^{\prime}(R)]^{\frac{1}{p}-\frac{1}{2}}\cdot[\mathcal{Y}^{\prime}(R)]^{\frac{p-1}{p}}+C R^{\frac{1}{p}}[\mathcal{Y}^{\prime}(R)]^{\frac{p-1}{p}}\\
                & \le CR^{\frac{p-1}{p}}(\ln R)^{\frac{2-p}{2(3-p)}}[\mathcal{Y}'(R)]^{\frac{p-1}{p}}+C R^{\frac{1}{p}}[\mathcal{Y}^{\prime}(R)]^{\frac{p-1}{p}}\\
                &\le CR^{\frac{p-1}{p}}[\mathcal{Y}^{\prime}(R)]^{\frac{p-1}{p}}.
			\end{aligned}
		\end{equation}
     Assume that $\Bu$ is not identically equal to zero. Then, by $Case \,(c_1) $ Proposition \ref{IG2}, $\mathcal{Y}(R)$ satisfies the lower bound established in \eqref{eqZRes}. So  $\mathcal{Y}(R)$ must be unbounded as $R \rightarrow +\infty$. For every $R_1\ge R_0$, integrating \eqref{3Dd1} over $[R_1,R_2)$, one arrives at
	\[
		C\ln \frac{R_2}{R_1} \le [\mathcal{Y}(R_1)]^{-\frac{1}{p-1}}-[\mathcal{Y}(R_2)]^{-\frac{1}{p-1}}\le
        [\mathcal{Y}(R_1)]^{-\frac{1}{p-1}}.
		\]
This leads to a contradiction  when $R_2$ is sufficiently large. Therefore,  $\Bu \equiv 0$.

		$Case\,(2)$. $p \ge 3$. Estimates for the first term on the right hand side of \eqref{Es1} are the same to \eqref{First term 3D d1}-\eqref{fj3}, one needs to estimate \eqref{non-linear term 3D d1}, \eqref{3Dd1esp} in a different way. Using Poincaré inequality yields
		\begin{equation}\label{non-linear term 3D d2}
			\begin{aligned}
				\left| \int_{\Omega} \frac{1}{2} |\Bu|^2 \Bu \cdot \nabla \tau_R \,\dd\Bx\right| &\leq \frac{C}{R} \|u^r\|_{L^{\infty}(Z_R)}\|\Bu\|_{L^{\infty}(Z_R)}\int_{Z_R}|\Bu|\, \dd \Bx\\
				&\le C \|u^r\|_{L^{\infty}(Z_R)}\|\Bu\|_{L^{\infty}(Z_R)}\|\nabla\Bu\|_{L^2(Z_R)}.
			\end{aligned}
		\end{equation}
		The similar computations as \eqref{3Dd1esp} show that
		\begin{equation}\label{3Dd2esp}
			\begin{aligned}
				\left|\int_{\Omega}P\Bu\cdot\nabla\tau_{R}\,\dd\Bx\right| 	&\le \frac{C}{R} \left(R\|\nabla\Bu\|_{L^2(Z_R)}\cdot \|u^r\|_{L^2(Z_R)}+R\|\D(\Bu)\|_{L^p(Z_R)}^{p-1}\cdot \|u^r\|_{L^p(Z_R)} \right)\\  
				&\quad + C\|u^r\|_{L^2(Z_R)}\cdot \|\Bu\|^2_{L^{4}(Z_R)}\\
				&\le  C\left(R\|u^r\|_{L^{\infty}(Z_R)}\cdot\|\nabla \Bu\|_{L^2(Z_R)}+R^{\frac{2}{p}}\|u^r\|_{L^{\infty}(Z_R)}\cdot\|\D(\Bu)\|_{L^p(Z_R)}^{p-1}  \right)
				\\
				&\quad + CR\|u^r\|_{L^{\infty}(Z_R)}\|\Bu\|_{L^{\infty}(Z_R)}\cdot\| \nabla\Bu\|_{L^2(Z_R)}.\\
			\end{aligned}
		\end{equation}
		Combining \eqref{First term 3D d1}-\eqref{fj3}, \eqref{non-linear term 3D d2} and \eqref{3Dd2esp}, one derives 
        \begin{equation}
            \begin{aligned}
                & \int_{\Omega}(|\nabla\Bu|^{2}+|\D(\Bu)|^p)\tau_{R}\,\dd\Bx\\ \le&  C\left(R\|u^r\|_{L^{\infty}(Z_R)}+\|\Bu\|_{L^{\infty}(Z_R)}+R\|u^r\|_{L^{\infty}(Z_R)}\|\Bu\|_{L^{\infty}(Z_R)}\right)\cdot\|\nabla \Bu\|_{L^2(Z_R)}
				\\
				&+C\left(R^{\frac{2}{p}}\|u^r\|_{L^{\infty}(Z_R)}+R^{\frac{2}{p}-1} \|\Bu\|_{L^{\infty}(Z_R)}\right)\cdot\|\D(\Bu)\|_{L^p(Z_R)}^{p-1}.
            \end{aligned}
        \end{equation}
Hence, one obtains 
		\begin{equation}\label{d2}
			\begin{aligned}
				\mathcal{Y}(R) &\le C \left(R^{\frac{3}{2}}\|u^r\|_{L^{\infty}(Z_R)}+ R^{\frac{1}{2}}\|\Bu\|_{L^{\infty}(Z_R)}+R^{\frac{3}{2}}\|u^r\|_{L^{\infty}(Z_R)}\|\Bu\|_{L^{\infty}(Z_R)}\right)[\mathcal{Y}^{\prime}(R)]^{\frac{1}{2}}\\
				&\quad +C \left( R^{\frac{p+1}{p}}\|u^r\|_{L^{\infty}(Z_R)}+R^{\frac{1}{p}}\|\Bu\|_{L^{\infty}(Z_R)}\right)[\mathcal{Y}^{\prime}(R)]^{\frac{p-1}{p}}.
			\end{aligned}
		\end{equation}
    
 Suppose $\mathcal{Y}(R)$ is not identically zero, it follows from Proposition \ref{IG2} that there exist $R_0$ and $C_{\ast \ast(p)}=\widetilde{C}_3$ such that
		\[
		\mathcal{Y}(R)\ge C_0 R^{\frac{1}{\widetilde{C}_3}},  \quad \text{for any } \, R \ge R_0.
		\]
    	Combining  with \eqref{3DCasea2es} yields
		\[
		C_0 R^{\frac{1}{\widetilde{C}_3}} \le \widetilde{C}_3 R\mathcal{Y}^{\prime}(R),
		\]
		i.e.,
		\begin{equation}
			\mathcal{Y}^{\prime}(R) \ge \frac{C_0}{\widetilde{C}_3}R^{\frac{1}{\widetilde{C}_3}-1}.
		\end{equation}
		Utilizing \eqref{3Db}, $\mathcal{Y}(R)$ in estimate \eqref{d2}  can be reduced to
		\begin{equation}\label{3Dd11}
			\begin{aligned}
				\mathcal{Y}(R) &\le C R^{\frac{1}{2}}[\mathcal{Y}^{\prime}(R)]^{\frac{1}{p}-\frac{1}{2}}\cdot[\mathcal{Y}^{\prime}(R)]^{\frac{p-1}{p}}+C R^{\frac{1}{p}}[\mathcal{Y}^{\prime}(R)]^{\frac{p-1}{p}}\\
                & \le CR^{\frac{p-1}{p}+\frac{1}{\widetilde{C}_3}(\frac{1}{p}-\frac{1}{2})}[\mathcal{Y}^{\prime}(R)]^{\frac{p-1}{p}}+C R^{\frac{1}{p}}[\mathcal{Y}^{\prime}(R)]^{\frac{p-1}{p}}\\
                &\le CR^{\frac{p-1}{p}}[\mathcal{Y}^{\prime}(R)]^{\frac{p-1}{p}}.
			\end{aligned}
		\end{equation}
A similar argument to the proof of $Case\,(1)$ applies here for $p \ge 3$, showing that $\Bu$ is a zero vector. This completes the proof of Theorem \ref{th:02}.

	\end{proof}

	{\bf Acknowledgement.}
The authors thank Professor Yun Wang for valuable discussions and suggestions.
	
	\medskip

\end{document}